\newcommand{\weixiao}{\fontsize{0.7pt}{\baselineskip}\selectfont}
\theoremstyle{plain}\newtheorem{definition}{Definition}[section]
\theoremstyle{definition}\newtheorem{theorem}{Theorem}[section]
\theoremstyle{plain}\newtheorem{lemma}[theorem]{Lemma}
\theoremstyle{plain}\newtheorem{coro}[theorem]{Corollary}
\theoremstyle{plain}\newtheorem{prop}[theorem]{Proposition}
\theoremstyle{remark}\newtheorem{remark}{Remark}[section]
\newcommand{\wgr}[1]{\textcolor{black}{#1}}
\newcommand{\wred}[1]{\textcolor{black}{#1}}
\newcommand{\Div}{\mathrm{div}\,}
\newcommand{\B}{\Big}
\newcommand{\R}{\mathbb{R}}
\newcommand{\be}{\begin{equation}}
\newcommand{\ee}{\end{equation}}
\newcommand{\ba}{\begin{aligned}}
	\newcommand{\ea}{\end{aligned}}
\providecommand{\bysame}{\leavevmode\hbox to3em{\hrulefill}\thinspace}
\newcommand{\f}{\frac}
\newcommand{\ben}{\begin{enumerate}}
	\newcommand{\een}{\end{enumerate}}
\newcommand{\ti}{\nabla}
\newcommand{\Rmnum}[1]{\expandafter\@slowromancap\romannumeral #1@}
\numberwithin{equation}{section}
\begin{document}
\title{   Leray's backward self-similar solutions to
		   the 3D Navier-Stokes equations in Morrey spaces}
		\author{Quansen Jiu\footnote{
School of Mathematical Sciences, Capital Normal University, Beijing 100048, P. R. China Email: jiuqs@cnu.edu.cn},\,\,\,\, Yanqing Wang\footnote{ Department of Mathematics and Information Science, Zhengzhou University of Light Industry, Zhengzhou, Henan  450002,  P. R. China Email: wangyanqing20056@gmail.com}\;~ and\, \, Wei Wei\footnote{Center for Nonlinear Studies, School of Mathematics, Northwest University, Xi'an, Shaanxi 710127, P. R. China Email: ww5998198@126.com } }
\date{}
\maketitle
\begin{abstract}
In this paper,
it is shown that there
  does not exist a non-trivial Leray's backward self-similar solution to the 3D Navier-Stokes equations with profiles in Morrey spaces $\dot{\mathcal{M}}^{q,1}(\mathbb{R}^{3})$ provided $3/2<q<6$, or in $\dot{\mathcal{M}}^{q,l}(\mathbb{R}^{3})$ provided $6\leq q<\infty$ and $2<l\leq q$.
  This  generalizes the   corresponding
results obtained by
  Ne\v{c}as-R\r{a}u\v{z}i\v{c}ka-\v{S}ver\'{a}k
   \cite[Acta. Math. 176 (1996)]{[NRS]} in $L^{3}(\mathbb{R}^{3})$, Tsai \cite[Arch. Ration. Mech. Anal. 143 (1998)]{[Tsai]} in $L^{p}(\mathbb{R}^{3})$ with $p\geq3$,
  Chae-Wolf \cite[Arch. Ration. Mech. Anal. 225 (2017)]{[CW]} in Lorentz spaces $L^{p,\infty}(\mathbb{R}^{3})$ with $p>3/2$, and Guevara-Phuc
\cite[SIAM J. Math. Anal. 12 (2018)]{[GP2]} in $\dot{\mathcal{M}}^{q,\f{12-2q}{3}}(\mathbb{R}^{3})$ with $12/5\leq q<3$ and in $L^{q, \infty}(\R^3)$ with $12/5\leq q<6$.
  \end{abstract}
	\noindent {\bf MSC(2000):}\quad 35B65, 35D30, 76D05 \\\noindent
	{\bf Keywords:} Navier-Stokes equations;   self-similar solutions; Morrey spaces
	\section{Introduction}
	\label{intro}
	\setcounter{section}{1}\setcounter{equation}{0}
		We study  the following   incompressible Navier-Stokes equations in   three-dimensional space
	\be\left\{\ba\label{NS}
	&u_{t} -\Delta  u+ u\cdot\ti
	u+\nabla \pi=0, ~~\Div u=0,\\
	&u|_{t=0}=u_0,
	\ea\right.\ee
	where $u $ stands for the flow  velocity field, the scalar function $\pi$ represents the   pressure.   The
	initial  velocity $u_0$ satisfies   $\text{div}\,u_0=0$.

In a seminal work \cite{[Leray]}, Leray introduced the backward self-similar solutions to construct  singular solutions to the 3D Navier-Stokes equations \eqref{NS}. The so-called backward self-similar solution is a weak solution $(u,\pi)$  of \eqref{NS} satisfying, for $a>0, T\in \mathbb{R},$
\be\ba
&u(x,t)=\wred{\f{1}{\sqrt{2a(T-t)}}U\bigg(\f{x}{\sqrt{2a(T-t)}}\bigg)}, \label{learysb}\\
&\pi(x,t)=\f{1}{ 2a(T-t)}\Pi\bigg(\f{x}{\sqrt{2a(T-t)}}\bigg),
\ea\ee
where $U=(U_{1},U_{2},U_{3})$ and $\Pi$ are defined in $\mathbb{R}^{3}$,
and the pair $(u(x,t),\pi(x,t))$ is defined in $\mathbb{R}^{3}\times(-\infty,T)$.
We obtain a singular solution at $t=T$ if $U\neq0$ and
\be \label{SNS}
	-\Delta  U+a U+a(y\cdot \nabla)U+U\cdot\nabla U+ \nabla \Pi=0  , ~~\Div U=0, ~~y\in \mathbb{R}^{3}.
	 \ee
The first breakthrough of backward self-similar solutions
was due to  Ne{\v{c}}as-R{\r u}{\v{z}}i{\v{c}}ka-{\v S}ver\'ak \cite{[NRS]}. They ruled out the existence of Leray's backward self-similar solutions to the 3D Navier-Stokes system if a weak solution $ U$ of equations \eqref{SNS}  is in $   L^{3}(\mathbb{R}^{3})$ (see Section 2 for the definition of weak solutions of \eqref{SNS}).
Subsequently, various  results involving non-existence of Leray's backward self-similar non-trivial solutions  were  obtained by
 Tsai in \cite{[Tsai]}. Precisely,  he
proved that Leray's backward self-similar solution is trivial under the condition that $U\in L^{p}(\mathbb{R}^{3})$ with $3<p<\infty$, and the solution $U\in L^{\infty}(\mathbb{R}^{3})$ in system \eqref{SNS} is a constant. In addition, Tsai's second result is that
 there does not exist a non-trivial solution of the form \eqref{learysb} if $u$ satisfy
the  local  energy inequality
\begin{equation}\label{local-energy}
\sup_{t<s<T} \int_{B_{x_0}( r)}\dfrac12|u(x, s)|^2dx+\int_{t}^{T}\int_{B_{x_0}( r)} |\nabla u(x,s)|^2dxds<\infty,
\end{equation}
for some ball $B_{x_0}( r)$ and some $t<T$. Here, $B_{x_0}( r)$ denotes the ball of
center $x_0$ and radius $r$.

Very recently, Chae-Wolf \cite{[CW]} and Guevara-Phuc \cite{[GP2]} independently made progress
in this direction. On one hand,
 Chae-Wolf \cite{[CW]} proved that
if $U$ belong to the Lorentz spaces $ L^{q, \infty}(\R^3)$ with $q>\f32$ or
 \be\label{CW1}
\|  U\|_{L^{q}(B_{y_{0}}(1))}+\|\nabla U\|_{L^{2}(B_{y_{0}}(1))} =o(|y_{0}|^{1/2}), ~~\text{as} ~~|y_{0}|\rightarrow\infty,
\ee
then $U$ must be identically zero. Roughly speaking, the proof in \cite{[CW]} relied on  $\varepsilon$-regularity criteria
without pressure at one scale, the decay at infinity in  Lorentz spaces  and Tsai's first result mentioned above.
On the other hand, Guevara-Phuc \cite{[GP2]} showed that there does not  exist a non-trivial solution under the condition that $U$ is in Morrey spaces $ \dot{\mathcal{M}}^{q,\f{12-2q}{3}}(\mathbb{R}^{3})$ with $12/5\leq q<3$, or $U\in L^{q, \infty}(\R^3)$ with $12/5\leq q<6$, or $U\in L^{6}(\mathbb{R}^{3})$.
Their arguments were based on  Riesz potentials in Morrey spaces, Sobolev spaces with negative indices and Tsai's second aforementioned result.
The definitions of relevant function spaces can be found in Section
2.
Notice that there holds the following embedding relation
\be\label{inclusion} L^{q}(\mathbb{R}^{3})\hookrightarrow L^{q,\infty}(\mathbb{R}^{3})\hookrightarrow \dot{\mathcal{M}}^{q,l}( \mathbb{R}^{3})\hookrightarrow\dot{\mathcal{M}}^{q,1}( \mathbb{R}^{3}),~ 1\leq l<q.\ee
This fact can be found in \cite{[CX]}.

The well-posedness of the 3D Navier-Stokes equations in Morrey spaces was studied in \cite{[MY],[Kato],[Taylor],[CX],[L]}.
Compared with the Lebesgue spaces $L^{q}(\mathbb{R}^{3})$ and  Lorentz spaces $L^{q,\infty}(\mathbb{R}^{3})$, $C_{0}^{\infty}(\mathbb{R}^{3})$ is not dense in
Morrey spaces. It is worth pointing out that Sawano \cite{[Sawano]} showed that $ \dot{\mathcal{M}}^{q,l_{2}}(\mathbb{R}^{n})$  is not dense in $\dot{\mathcal{M}}^{q,l_{1}} (\mathbb{R}^{n})$ if
 $1<l_{1}<l_{2}\leq q$. Based on this, the first objective of this paper is to generalize Guevara and Phuc's result in \cite{[GP2]} to general Morrey spaces. Our result is reformulated as
\begin{theorem}\label{the1.1}
Let $U\in W^{1,2}_{\rm loc}(\R^3)$ be a weak solution  of \eqref{SNS}. If
\be\label{cond1}
U\in \dot{\mathcal{M}}^{q,l}(\mathbb{R}^{3}) ~~ \text{with }~~ 2<l\leq q<\infty,
\ee
  then $U\equiv0$.
\end{theorem}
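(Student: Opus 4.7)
The strategy is to reduce Theorem~\ref{the1.1} to Tsai's second result \cite{[Tsai]} by verifying that the self-similar field $u$ associated to $U$ satisfies the local energy inequality \eqref{local-energy} on some ball $B_{x_0}(r)$ (with $x_0 \neq 0$) and some interval $(t,T)$; once this is established, Tsai's theorem forces $U \equiv 0$. As a preliminary step, I would recover the pressure from $U$: taking divergence in \eqref{SNS} and using $\Div U = 0$ together with the identity $\Div((y\cdot\nabla)U) = 0$ gives $-\Delta\Pi = \partial_i\partial_j(U_iU_j)$. Hence $\Pi$ is a Calder\'on--Zygmund transform of $U\otimes U$, and since Riesz transforms are bounded on Morrey spaces, $\Pi \in \dot{\mathcal{M}}^{q/2,l/2}(\mathbb{R}^3)$ with norm controlled by $\|U\|^2_{\dot{\mathcal{M}}^{q,l}}$.

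The self-similar change of variables $y = x/\sqrt{2a(T-s)}$, $\tau = T-s$, converts the two quantities in \eqref{local-energy} into
\begin{align*}
\int_t^T \int_{B_{x_0}(r)} |u|^2 \, dx\, ds &= \int_0^{T-t} (2a\tau)^{1/2} \int_{B_{y_0(\tau)}(\rho(\tau))} |U|^2 \, dy\, d\tau, \\
\int_t^T \int_{B_{x_0}(r)} |\nabla u|^2 \, dx\, ds &= \int_0^{T-t} (2a\tau)^{-1/2} \int_{B_{y_0(\tau)}(\rho(\tau))} |\nabla U|^2 \, dy\, d\tau,
\end{align*}
with $y_0(\tau) = x_0/\sqrt{2a\tau}$ and $\rho(\tau) = r/\sqrt{2a\tau}$; both balls drift to infinity at rate $\tau^{-1/2}$ as $\tau\to 0^+$. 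The embedding $\dot{\mathcal{M}}^{q,l}\hookrightarrow \dot{\mathcal{M}}^{q,2}$, available since $l>2$, yields $\int_{B_R(y)}|U|^2 \leq C R^{3-6/q}\|U\|^2_{\dot{\mathcal{M}}^{q,l}}$, so the first time integral reduces to a constant multiple of $\int_0^{T-t} \tau^{3/q-1}\,d\tau$, which is finite for $q<\infty$.

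For the gradient integral I would apply a Caccioppoli estimate: testing \eqref{SNS} against $U\phi^2$ with a cutoff $\phi$ supported in $B_{y_0}(2\rho)$ and equal to $1$ on $B_{y_0}(\rho)$, and absorbing the mixed term via Young's inequality, yields
\[
\int_{B_{y_0}(\rho)}|\nabla U|^2 \leq C\!\left(1 + \frac{|y_0|+\rho}{\rho}\right)\!\int_{B_{y_0}(2\rho)}|U|^2 + \frac{C}{\rho}\int_{B_{y_0}(2\rho)}\!\left(|U|^3 + |\Pi||U|\right).
\]
Each term on the right is controlled via Morrey norms of $U$ (using H\"older interpolation in Morrey spaces, together with local Sobolev embedding from $W^{1,2}_{\mathrm{loc}}$ when the exponents require boosting) and of $\Pi$ from the first step. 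Choosing $r$ small relative to $|x_0|$ keeps $\rho/|y_0|$ bounded, after which integration in $\tau$ against $(2a\tau)^{-1/2}$ is arranged to converge.

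The main obstacle I anticipate is the self-similar drift $a(y\cdot\nabla)U$ in \eqref{SNS}, which forces a $|y|\sim|y_0|$-weight on the right-hand side of Caccioppoli. This factor compounds with the divergent Jacobian $(2a\tau)^{-1/2}$ and threatens time integrability, especially for large $q$. The strict inequality $l>2$ is what provides the extra slack needed to close these estimates; the endpoint $l=2$, corresponding to the classical Lorentz range treated in \cite{[GP2]}, appears borderline and demands a different treatment.
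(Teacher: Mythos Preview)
Your strategy of reducing to Tsai's result via \eqref{local-energy}, and of recovering $\Pi\in\dot{\mathcal M}^{q/2,l/2}$ through the boundedness of Riesz transforms on Morrey spaces (which is where $l>2$ is used), is correct and agrees with the paper. The gap is in how you propose to verify the two quantities in \eqref{local-energy}.

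First, \eqref{local-energy} demands $\sup_{t<s<T}\int_{B_{x_0}(r)}|u(x,s)|^2\,dx<\infty$, not the time integral you compute. Under the self-similar change of variables this supremum is $\sup_{\tau}(2a\tau)^{1/2}\int_{B_{y_0(\tau)}(\rho(\tau))}|U|^2\,dy$, and feeding in the Morrey bound $\int_{B_\rho}|U|^2\le C\rho^{3-6/q}$ gives a quantity of order $\tau^{1-3/q}$ as $\tau\to 0^+$, which blows up for every $q>3$. Second, the same Morrey bound substituted into your Caccioppoli inequality yields $\int_{B_{y_0}(\rho)}|\nabla U|^2\lesssim \rho^{3-6/q}$ (the drift coefficient $|y_0|/\rho=|x_0|/r$ is harmless, as you note), and then
\[
\int_0^{T-t}(2a\tau)^{-1/2}\rho(\tau)^{3-6/q}\,d\tau \sim \int_0^{T-t}\tau^{3/q-2}\,d\tau
\]
diverges for $q\ge 3$. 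Thus your argument, as written, covers only $q<3$, precisely the range of \cite{[GP2]}; the new content of Theorem~\ref{the1.1}, namely $3\le q<\infty$, is not reached. The cubic and pressure terms in your Caccioppoli estimate have the same defect for $l<3$.

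The paper avoids this obstruction by abandoning the elliptic Caccioppoli on the profile and working instead with the parabolic local energy inequality \eqref{loc} for the smooth self-similar pair $(u,\pi)$. The cubic flux $\iint|u|^3$ is controlled via the Meyer--Gerard--Oru interpolation \eqref{mgoi} together with the embedding $\dot{\mathcal M}^{q,1}\hookrightarrow\dot B^{-3/q}_{\infty,\infty}$ (Lemma~\ref{ineq}), and the pressure is decomposed as in Lemma~\ref{lem2}. This produces an inequality in which the energy $E(r)$ on a smaller cylinder is bounded by a sublinear power of $E(\rho)$ times $\|u\|_{L^p_t\dot{\mathcal M}^{q,1}_x}$--type norms; a standard iteration then closes it to give Proposition~\ref{the1.2}. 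The essential point is that the $\sup_s\int|u|^2$ term is generated by the parabolic local energy inequality itself and absorbed through iteration, rather than estimated directly from the Morrey norm of $U$.
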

\begin{remark}
This theorem extends all the known results involving non-existence
of Leray's backward self-similar non-trivial solutions to the 3D
Navier-Stokes equations in \cite{[Tsai],[NRS],[GP2],[CW]}.
\end{remark}

 We follow the path of \cite{[GP2],[Tsai],[NRS]} to prove Theorem \ref{the1.1}. First, in contrast to  \cite{[GP2]}, we construct pressure $\Pi$ in the Morrey space directly under condition \eqref{cond1}
 without the application of
 Riesz potentials in Morrey spaces and dual spaces of Sobolev spaces.
 Second, as \cite{[GP2]}, our main target is   to  deduce  \eqref{local-energy} by deriving the  energy bound \eqref{enebound} in terms of  \eqref{cond1} from the local energy inequality \eqref{loc}.  The key tool in \cite{[GP2]} is to utilize the  duality between $W^{-1,2}(B)$ and the Sobolev space $W^{1,2}_0(B)$ to bound energy flux in local energy inequality \eqref{loc}, namely, for $12/5\leq q<3,$
  \be\ba\label{gpei}
  \left|\int_{T-R^{2}}^{T}\int_{B}(|u|^{2}+2\pi)u\cdot\nabla \phi dxdt\right|
 =&\left|\int_{T-R^{2}}^{T}\langle |u|^{2}+2\pi ,u\cdot\nabla \phi \rangle_{W^{-1,2}(B),W_{0}^{1,2}(B)} dt\right| \\
\leq&C\int_{T-R^{2}}^{T}\lambda^{2-\f6q}(t)\|U\|_{
\mathcal{\dot{M}}^{q,\f{12-2q}{3}}(\mathbb{R}^{3})}^{2} \|\nabla u\|_{L^{2}(B)}dt,
 \ea\ee
 where the Hardy-Littlewood-Sobolev inequality for Riesz potentials in Morrey spaces was used and $\lambda(t)=[2a(T-t)]^{-1/2}$. It is worth remarking that the  upper bound $q<3$  in \cite{[GP2]} comes from the  employment  of Hardy-Littlewood-Sobolev inequality, but this strategy breaks down in the case \eqref{cond1}. Here, we make full use of the Meyer-Gerard-Oru interpolation
inequality \eqref{mgoi} and the fact that Morrey spaces $\dot{\mathcal{M}}^{q,1}(\mathbb{R}^{3})$ can be embedded in Besov spaces $\dot{B}^{-\f{3}{q}}_{\infty,\infty}(\mathbb{R}^{3})$ for $1<q<\infty$. This helps us  control energy flux and the first term in the left hand side of \eqref{gpei} as follows
  $$\ba
&\int_{T-\f{R^{2}}{4}}^{T}\|u\|^{3}_{L^{3}(B(R/2))}dt
\\\leq& C \B(\sup_{T-R^{2}\leq t\leq T}\| u\|^{2}_{L^{2}(B(R))}+\int_{T-R^{2}}^{T}\|\nabla u\|^{2}_{L^{2}(B(R))}dt\B)^{\f32-\f{1}{\alpha}}
\B(\int_{T-\f{R^{2}}{4}}^{T}\lambda^{p-\f{3p}{q}}(t)\|U\|^{p}_{\dot{\mathcal{M}}^{q,1}(\mathbb{R}^{3})}dt\B)^{\f{2}{p\alpha}},
\ea$$
where $\alpha=2/p+3/q<2$.
Additionally, we apply the pressure decomposition  as \cite{[HWZ]}  to bound  the second term in the left hand side of \eqref{gpei}. To sum up, we get the following energy bound
\be\ba \label{enebound}
&\|u\|^2_{L^{\infty}(T-\f{R^{2}}{4},T;L^{2}(B(R/2)))}+\|\nabla u\|^2_{L^{2}(T-\f{R^{2}}{4},T;L^{2}(B(R/2)))} \\ \leq&    CR^{3-2\alpha}\B( \int_{T-R^{2}}^{T}\lambda^{p-\f{3p}q}(t)\|U\|^{p}
_{\mathcal{\dot{M}}^{q,1}(\mathbb{R}^{3})} dt\B) ^{\f2p} +  CR^{\f{6-5\alpha}{2-\alpha}} \B(\int_{T-R^{2}}^{T}\lambda^{p-\f{3p}q}(t)\|U\|^{p}
_{\mathcal{\dot{M}}^{q,1}(\mathbb{R}^{3})} dt\B)^{\f{4}{p(2-\alpha )}}\\& + CR^{1-\f6m} \B(\int_{T-R^{2}}^{T}\lambda^{2-\f{3}m}(t) \|\Pi\|
_{\mathcal{\dot{M}}^{m,1}(\mathbb{R}^{3})} dt \B)^{2}.
\ea\ee
This together with \eqref{local-energy} means Theorem \ref{the1.1}.

It should be mentioned that the extra restriction that $q>2$ and $l>2$ in   \eqref{cond1} resulted from the construction process of pressure $\Pi=\mathcal{R}_i \mathcal{R}_j(U_{i}U_{j})$ in the proof of Theorem \ref{the1.1}. Partially motivated by Chae-Wolf in \cite{[CW]}, our next target is to utilize local suitable weak solutions (see Definition \ref{defilsw}) to remove this restriction.
\begin{theorem}\label{the1.4}
Let $U\in W^{1,2}_{\rm loc}(\R^3)$ be a weak solution  of \eqref{SNS}. If
\be\label{cond1.4}
U\in \dot{\mathcal{M}}^{q,1}(\mathbb{R}^{3}) ~~ \text{with }~~ \f32<q<6,
\ee
  then $U\equiv0$.
\end{theorem}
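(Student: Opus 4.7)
The plan is to follow the framework of Theorem \ref{the1.1}---derive a local energy bound of type \eqref{enebound} and then invoke Tsai's second result via \eqref{local-energy}---but to bypass the global pressure representation $\Pi=\mathcal{R}_i\mathcal{R}_j(U_iU_j)$, which was the source of the restrictions $q>2$ and $l>2$. Following the suggestion in the paragraph that precedes the theorem, the key substitute is the framework of local suitable weak solutions (Definition \ref{defilsw}) in the spirit of Chae--Wolf \cite{[CW]}, in which the local energy inequality is paired with a purely locally-defined pressure and no global integrability of $\Pi$ is required.

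The first step is a local pressure decomposition. Since $U\in W^{1,2}_{\mathrm{loc}}(\mathbb{R}^{3})$, Sobolev embedding gives $U\in L^{6}_{\mathrm{loc}}$, hence $u\otimes u\in L^{3}_{\mathrm{loc}}$. For any ball $B=B_{x_{0}}(R)$ I would split $\pi=\pi^{(1)}+\pi^{(2)}$ on $2B$, where
\[
\pi^{(1)}(\cdot,t)=\mathcal{R}_{i}\mathcal{R}_{j}\bigl(\chi_{2B}\,u_{i}u_{j}\bigr)(\cdot,t)
\]
is well defined in $L^{3}(\mathbb{R}^{3})$ by Calder\'on--Zygmund, and $\pi^{(2)}=\pi-\pi^{(1)}$ is harmonic on $B$; with this choice $(u,\pi)$ is a local suitable weak solution on $B$ and so satisfies the local energy inequality \eqref{loc}.

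Next I would estimate each term in \eqref{loc}. The convection flux $\int(|u|^{2}u)\cdot\nabla\phi$ is treated exactly as in the proof of Theorem \ref{the1.1}, by combining the Meyer--Gerard--Oru interpolation \eqref{mgoi} with the embedding $\dot{\mathcal{M}}^{q,1}(\mathbb{R}^{3})\hookrightarrow\dot B^{-3/q}_{\infty,\infty}(\mathbb{R}^{3})$; this reproduces the first two right-hand terms of \eqref{enebound} without any restriction beyond $3/2<q<6$. The pressure flux splits as $\int\pi^{(1)}u\cdot\nabla\phi+\int\pi^{(2)}u\cdot\nabla\phi$: the $\pi^{(1)}$-piece is controlled by the $L^{3}\!\to\!L^{3}$ boundedness of Riesz transforms, which reduces it to a power of $\|u\|_{L^{3}(2B)}$ and hence, through another application of \eqref{mgoi}, to the Morrey norm of $U$, while the harmonic remainder $\pi^{(2)}$ is bounded in $L^{\infty}(B/2)$ by interior estimates for harmonic functions in terms of an integrable norm of $\pi^{(2)}$ over $B$, which through $\pi^{(2)}=\pi-\pi^{(1)}$ feeds back into quantities already controlled by $\|U\|_{\dot{\mathcal{M}}^{q,1}}$. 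A standard absorption argument then yields a bound of type \eqref{enebound} valid in the full range \eqref{cond1.4}.

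The main obstacle will be the harmonic piece $\pi^{(2)}$: since it carries all far-field information about the pressure, its bound cannot be extracted from data on $2B$ alone, and I expect to combine the self-similar scaling $\lambda(t)=[2a(T-t)]^{-1/2}$ with an iteration over concentric balls to convert the harmonic estimate into a Morrey norm of $U$. The upper ceiling $q<6$ should enter precisely at this stage, through the requirement that $\alpha=2/p+3/q<2$ admit an admissible $p$ for which $\int_{T-R^{2}}^{T}\lambda^{p-3p/q}(t)\,dt<\infty$; this range degenerates at $q=6$. Once the modified \eqref{enebound} is in hand, it implies \eqref{local-energy} for a fixed ball $B_{x_{0}}(r)$ and $t$ sufficiently close to $T$, and Tsai's second result then forces $U\equiv 0$.
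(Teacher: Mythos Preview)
Your proposal has a genuine gap in the treatment of the pressure. The decomposition $\pi=\pi^{(1)}+\pi^{(2)}$ with $\pi^{(2)}=\pi-\pi^{(1)}$ harmonic presupposes that a pressure $\pi$ associated with $u$ is already given with enough integrability to make sense of $\pi^{(2)}$ and to estimate it on $B$; but this is precisely what is unavailable in the range \eqref{cond1.4}, and it is the reason one has to depart from the strategy of Theorem~\ref{the1.1}. Your suggestion to control $\pi^{(2)}$ by ``iteration over concentric balls'' cannot work as stated: each larger ball produces a new harmonic remainder carrying far-field information, and there is nothing in the Morrey norm of $U$ alone that bounds this tail without a prior global construction of $\Pi$.

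The paper proceeds differently and avoids the circularity. First, it uses Wolf's local pressure projection (Definition~\ref{defilsw}): on each ball one defines $\nabla\pi_{h}=-\mathcal{W}_{p,B}(u)$, $\nabla\pi_{1}=\mathcal{W}_{p,B}(\Delta u)$, $\nabla\pi_{2}=-\mathcal{W}_{p,B}(u\cdot\nabla u)$ via the Stokes problem, and works with $v=u+\nabla\pi_{h}$ in the modified local energy inequality \eqref{wloc1}. All three pieces satisfy purely local estimates \eqref{ph}--\eqref{p2}, so no pre-existing $\pi$ is needed. This, combined with the localized Meyer--Gerard--Oru inequality, yields a Caccioppoli-type inequality (Proposition~\ref{cip}) bounding $\|u\|_{L^{3}L^{18/5}}$ and $\|\nabla u\|_{L^{2}}$ on $Q(R/2)$ by powers of $\|u\|_{L^{6q/(2q-3)}\dot{\mathcal{M}}^{q,1}(Q(R))}$; the condition $3/2<q<6$ is exactly what makes the latter finite under the self-similar scaling. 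Second, the change of variables in \eqref{6.1} converts this into $\int_{\mathbb{R}^{3}}|U|^{3}|y|^{-2}\,dy<\infty$. Only now is the global pressure recovered: since $|y|^{-4/3}$ is an $A_{3/2}$ weight, the weighted Calder\'on--Zygmund theorem gives $\widetilde{\Pi}=\mathcal{R}_{i}\mathcal{R}_{j}(U_{i}U_{j})\in L^{3/2}_{\mathrm{loc}}$, and an argument parallel to Lemma~\ref{cp} shows that $(U,\widetilde{\Pi})$ solves \eqref{SNS}. With $\pi$ defined through \eqref{learysb} one has $\pi\in L^{3/2}(Q(1))$, and the \emph{classical} local energy inequality \eqref{loc} then gives \eqref{local-energy}, from which Tsai's result concludes $U\equiv0$. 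The point you are missing is that the local-suitable-weak-solution machinery is used not to finish the proof directly, but to generate just enough integrability of $U$ to legitimize a global $\Pi$ via weighted singular integrals.
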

\begin{remark}
According to \eqref{inclusion}, this theorem is an improvement of corresponding results in \cite{[Tsai],[NRS],[GP2],[CW]}.
\end{remark}
The proof of Theorem \ref{the1.4} is based on a combination of techniques from \cite{[Wolf1],[Tsai],[CW]}. Our starting point is to set up a new Caccioppoli type inequality
 below
\be\ba\label{Caccioppoli inequ}
&\|u\|^{2}_{L^{3}(T-\f{R^{2}}{4},T;L^{\f{18}{5}}(B(\f{R}{2})))}+ \|\nabla u\|^{2}_{L^{2}(T-\f{R^{2}}{4},T;L^{2}(B(\f{R}{2})))}\\
\\
\leq &
  CR^{\f{5q-12}{3q}}\| u\|^{2}_{L^{\f{6q}{2q-3}}(T-R^{2},T;\dot{\mathcal{M}}^{q,1}(B(R)))}
  \\&+CR^{\f{4q-15}{2q-3}}\| u\|^{\f{6q}{2q-3}}_{L^{\f{6q}{2q-3}}(T-R^{2},T;\dot{\mathcal{M}}^{q,1}(B(R)))}
  +CR^{\f{2q-6}{q}}\| u\|^{3}_{L^{\f{6q}{2q-3}}(T-R^{2},T;\dot{\mathcal{M}}^{q,1}(B(R)))}.
\ea\ee
This inequality is derived by local suitable weak solutions to the 3D Navier-Stokes equations \eqref{NS} and
  the aforementioned Meyer-Gerard-Oru interpolation
inequality. Various  Caccioppoli type inequalities were recently established in
 \cite{[JWZ],[CW],[HWZ],[Wolf1],[Wolf2]}.
 All the proofs  rest on
  local suitable weak solutions originated in
  \cite {[Wolf1],[Wolf2]}
  by Wolf.
It is known that any usual suitable weak solution to the Navier-Stokes system enjoys the
local energy inequality \eqref{wloc1} (see \cite[Appendix A, p.1372]{[CW2]}).
The novelty of local suitable weak solutions is that, as stated in \cite{[Wolf1],[Wolf2],[CW]}, the  relevant  local energy
inequality \eqref{wloc1} removed   the non-local effect of the pressure term.
 Based on this, Caccioppoli type inequality \eqref{Caccioppoli inequ} and \eqref{cond1.4}  allow us to derive that
$$
\|\nabla U\|_{L^{2}(B_{y_{0}}(1))}=o(|y_{0}|^{1/2}) ~~~\text{and}~~~
\|  U\|_{L^{3}(B_{y_{0}}(1))}=o(|y_{0}|^{2/3}), ~~\text{as} ~~|y_{0}|\rightarrow\infty.
$$
  However, in light of \eqref{CW1}, this is not enough to show that $U\equiv0$. Notice that \eqref{Caccioppoli inequ} implies $\int_{\mathbb{R}^{3}}|U|^{3}|y|^{-2}dy<\infty$, our approach to overcome this difficulty is to construct the pressure $\Pi$ via $A_p$ weighted inequalities for singular integrals, which enables us to obtain $\pi\in L^{3/2}_{\rm loc}$ in terms of \eqref{learysb}. A similar argument has been used by Tsai in \cite{[Tsai],[Tsai2]} for the proof of his second result \eqref{local-energy}.  Adopting this approach together with the local energy inequality \eqref{wloc1}, we deduce \eqref{local-energy} and finally prove Theorem \ref{the1.4}.

  Roughly, the following four figures summarize known results about non-existence of Leray's
backward self-similar non-trivial solutions in the framework of Morrey spaces $\dot{\mathcal{M}}^{q,p}(\mathbb{R}^{3})$.



\begin{tikzpicture}[scale=.6,>= stealth]
\node[above] at (4.2,4.6) {\tiny{$p=q$}};
\node[below] at (3.5,-1) {\tiny{Figure 1:  Region of Ne{\v{c}}as-R{\r u}{\v{z}}i{\v{c}}ka-{\v S}ver\'ak, \\ Tsai and Chae-Wolf}};
\draw[->] (0,0)--(7,0)node[right]{\tiny{$q$}};
\foreach \x in {1,...,6}
     		\draw (\x,0) -- (\x,-.1)
		node[anchor=north] {\tiny{\x}};
\draw[->](0,0)--(0,7)node[above]{\tiny{$p$}};
\foreach \y in {1,...,6}
     		\draw (0,\y) -- (.1,\y)
		node[anchor=east] {\tiny{\y}};
\draw (1.5,1.5)--(6.5,6.5);
\draw[fill=white] (1.5,1.5) circle(.05);
\node[below] at (3/2,3/2) {\tiny{(1.5,1.5)}};
\node[below] at (0,-0.05) {\tiny{0}};
\end{tikzpicture} \vspace{.5cm}
\begin{tikzpicture}[scale=.6,>= stealth]
\fill[gray!30] (2.4,2.4) -- ( 3,2)--(3,3);
\draw[->] (0,0)--(7,0)node[right]{\tiny{$q$}};

\foreach \x in {1,...,6}
     		\draw (\x,0) -- (\x,-.1)
		node[anchor=north] {\tiny{\x}};
\draw[->](0,0)--(0,7)node[above]{\tiny{$p$}};
\foreach \y in {1,...,6}
     		\draw (0,\y) -- (.1,\y)
		node[anchor=east] {\tiny{\y}};
\draw[style=dashed] (3,2)--(3,3 );
\draw (2.4,2.4)--(3,2);
\draw (2.4,2.4)--(6,6);
\draw[fill=white] (3,2) circle(.05);
\draw[fill=white] (6,6) circle(.05);
\node[right] at (6,6) {\weixiao{(6,6)}};
\node[below] at (3,2) {\weixiao{(3,2)}};
\node[left] at (12/5,12/5) {\weixiao{(2.4,2.4)}};
\node[below] at (3.5,-0.8) {\tiny{Figure 2: Region of Guevara-Phuc}};
\node[below] at (0,-0.05) {\tiny{0}};
\node[above] at (4.2,4.6) {\tiny{$p=q$}};
\end{tikzpicture} \vspace{.5cm}

\begin{tikzpicture}[scale=.6,>= stealth]
\node[below] at (2,2) {\weixiao{(2,2)}};
\node[below] at (3.8,-0.8) {\tiny Figure 3: Region of Theorem 1.1};
\fill[gray!30] (2,2) -- ( 6.5,2)--(6.5,6.5);
\draw[->] (0,0)--(7,0)node[right]{\tiny{$q$}};
\node[below] at (0,-0.05) {\tiny{0}};
\foreach \x in {1,...,6}
     		\draw (\x,0) -- (\x,-.1)
		node[anchor=north] {\tiny{\x}};
\draw[->](0,0)--(0,7)node[above]{\tiny{$p$}};
\foreach \y in {1,...,6}
     		\draw (0,\y) -- (.1,\y)
		node[anchor=east] {\tiny{\y}};
\draw[style=dashed] (2,2)--(6.5,2 );
\node[above] at (4.2,4.6) {\tiny{$p=q$}};
\draw (2,2)--(6.5,6.5);
\draw[fill=white] (2,2) circle(.05);
\end{tikzpicture} \vspace{.5cm}
\begin{tikzpicture}[scale=.6,>= stealth]
\node[left] at (1.6,1.5) {\weixiao{(1.5,1.5)}};
\node[right] at (6,6) {\weixiao{(6,6)}};
\node[below] at (6,1) {\weixiao{(6,1)}};
\node[below] at (1.5,1) {\weixiao{(1.5,1)}};
\node[below] at (3.8,-0.8) {\tiny Figure 4: Region of  Theorem 1.2};
\fill[gray!30] (1.5,1.5) -- ( 6,6)-- ( 6,1)--(1.5,1);
\draw[->] (0,0)--(7,0)node[right]{\tiny{$q$}};
\node[below] at (0,-0.05) {\tiny{0}};
\foreach \x in {1,...,6}
     		\draw (\x,0) -- (\x,-.1)
		node[anchor=north] {\tiny{\x}};
\draw[->](0,0)--(0,7)node[above]{\tiny{$p$}};
\foreach \y in {1,...,6}
     		\draw (0,\y) -- (.1,\y)
		node[anchor=east] {\tiny{\y}};

\node[above] at (4.2,4.6) {\tiny{$p=q$}};
\draw (1.5,1.5)--(6,6);
\draw (1.5,1)--(6,1);
\draw[fill=white] (1.5,1.5) circle(.05);
\draw[fill=white] (6,6) circle(.05);
\draw[fill=white] (6,1) circle(.05);
\draw[fill=white] (1.5,1) circle(.05);
\draw[style=dashed] (6,1)--(6,6 );
\draw[style=dashed] (1.5,1.35)--( 1.5,1.15);
\end{tikzpicture}

Eventually we would like to mention that, as a by-product of the energy bound and the Caccioppoli type inequality obtained in the proof of Theorem 1.1 and Theorem 1.2, one can establish some
  $\varepsilon$-regularity criteria at one scale in Morrey spaces. To the best knowledge of the authors, this is the first $\varepsilon$-regularity criterion involving Morrey spaces, which is of independent interest.  For the details, see Corollary \ref{the1.3} in Section 3 and Corollary  \ref{the1.5} in Section 4.

This  paper is organized as follows. In the  second section,
we recall the definitions of various function spaces and those of suitable weak solutions including local suitable weak solutions. In addition, we present some auxiliary lemmas.
Section 3 is devoted to the proof of Theorem 1.1. To this end, along the line of \cite{[GP2],[Tsai],[NRS]}, we
construct the pressure $\Pi$ in Morrey spaces, and then localize the  Meyer-Gerard-Oru interpolation
inequality. This together with the local energy inequality yields the energy bound, which concludes  the proof of our first theorem.
In Section 4, we deal with the Caccioppoli type inequality by local suitable weak solutions and local Meyer-Gerard-Oru interpolation
inequality obtained in Section 3.  Then $A_p$ weighted inequalities enable us to  recover the pressure. Finally, applying the local energy inequality \eqref{wloc1} again and \eqref{local-energy}, we complete the proof of Theorem 1.2.

 \section{Function spaces and some known facts} \label{sec2}

\subsection{Function spaces}

In this section, we begin with definitions of various function spaces.
Let $\mathcal{S}\left(\mathbb{R}^{n}\right)$ be the set of all Schwartz functions on $\mathbb{R}^{n}$, endowed with the usual topology, and denote by $\mathcal{S}^{\prime}\left(\mathbb{R}^{n}\right)$ its topological dual, namely, the space of all bounded linear functionals on $\mathcal{S}\left(\mathbb{R}^{n}\right)$ endowed with the weak *-topology.
The classical Sobolev space $W^{1,p}(\Omega)$, with $1\leq  p\leq\infty$, is equipped with the norm $\|f\|_{W^{1,p}(\Omega)}= \|f\|_{L^{p}(\Omega)}+ \|\nabla f\|_{L^{p}(\Omega)}$. A function $f\in W^{1,p}_{\rm loc}(\mathbb{R}^{n})$ if and only if $f\in W^{1,p}(B)$ for every ball $B\subset\mathbb{R}^{n}$. The space  $C^{\infty}_{0}(\Omega)$ is  the set of all  the smooth compactly supported functions on $\Omega$.
Let $W_{0}^{1,p}(\Omega)$ be the completion of $C_{0}^{\infty}(\Omega)$ in the norm
of $W^{1,p}(\Omega)$. Let  $W^{-1,p}(\Omega)$ denote the dual of the Sobolev space $W_{0}^{1,p}(\Omega)$.

Next, we present the definitions of Lorentz spaces and Morrey spaces. For $q\in[1,\infty]$, let
$$
L^{q,\infty}(\mathbb{R}^{3})=\big\{f: f~ \text{is  a measurable function  on}~ \mathbb{R}^{3} ~\text{and} ~\|f\|_{L^{q,\infty}(\mathbb{R}^{3})}<\infty\big\}
$$
be the Lorentz space $L^{q,\infty}$ defined by means of the quasinorm
$$
\|f\|_{L^{q,\infty}(\mathbb{R}^{3})}=
\sup_{\alpha>0}\alpha|\{x\in \mathbb{R}^{3}:|f(x)|>\alpha\}|^{\f{1}{q}},
$$
where $|E|$ represents the three-dimensional Lebesgue measure of a set $E\subset \mathbb{R}^{3}$. The Morrey space $\mathcal{\dot{M}}^{p,l}(\Omega)$, with $1\leq l<\infty$, $1\leq  p\leq\infty$  and a domain $\Omega\subset\mathbb{R}^{3}$, is defined as the space of all measurable functions $f$ on $\Omega$ for which the norm
$$
\|f\|_{\mathcal{\dot{M}}^{p,l}(\Omega)}=\sup_{R>0}
\sup_{x\in \Omega}R^{3(\f1p-\f1l)}\B(\int_{B_{x}(R)\cap\Omega}|f(y)|^{l}dy\B)^{\f1l}<\infty.
$$
Here $B_{x}(R)$ represents the open ball centered at $x\in \mathbb{R}^{3}$ with radius $R>0$. In particular, by using the Lebesgue differentiation theorem, one can easily prove that $\mathcal{\dot{M}}^{\infty,l}(\Omega)=L^{\infty}(\Omega)$.

To give the definition of Besov spaces, we denote $P_{t}=e^{t\Delta}$ as the heat semigroup on $\mathbb{R}^{n}$. For $\alpha<0$, a tempered distribution $f$ on $\mathbb{R}^{n}$ belongs to the Besov space $\dot{B}^{\alpha}_{\infty,\infty}(\mathbb{R}^{n})$ if and only if the following norm
$$
\|f\|_{\dot{B}^{\alpha}_{\infty,\infty}(\mathbb{R}^{n})}=\sup_{t>0}
t^{-\f{\alpha}{2}}\|P_{t}f\|_{L^{\infty}(\mathbb{R}^{n})}
$$ is finite.

As usual, given a Schwartz function $f$ on $\mathbb{R}^{n}$, the Fourier transform $\hat{f}$ of $f$ is given by $$\hat{f}(\xi)=\frac{1}{(2\pi)^{n}}\int_{\mathbb{R}^{n}}f (x)e^{-i\xi\cdot x}\,dx, $$
and the inverse Fourier transform $f^{\vee}$ is defined as
$$
f^{\vee}(\xi)=\widehat{f}(-\xi)
$$
for all $\xi \in \mathbb{R}^{n}.$ Furthermore, for $s\geq 0,$ we define $\Lambda^{s}f $ by
$$\widehat{\Lambda^{s} f}(\xi)=|\xi|^{s}\hat{f}(\xi),$$
where the notation $\Lambda$ stands for the square root of the negative Laplacian $(-\Delta)^{1/2}$. Now introduce
the homogenous Sobolev norm $\|\cdot\|_{\dot{H}^{s}(\mathbb{R}^{n})}$ as follows $$\|f\|_{\dot{H}^{s}(\mathbb{R}^{n})}=
\|\Lambda^{s}f\|_{L^{2}(\mathbb{R}^{n})}.$$
Denote $ H^{s}$ as the standard inhomogenous Sobolev space with the norm $$\|f\| _{H^{s}(\mathbb{R}^{n})}= \|\Lambda^{s}f\|_{L^{2}(\mathbb{R}^{n})}+\|f\|_{L^{2}(\mathbb{R}^{n})}.
$$
For $q\in [1,\,\infty]$, the notation $L^{q}(0,\,T;\,X)$ stands for the set of all measurable functions $f(x,t)$ on the interval $(0,\,T)$ with values in $X$ and $\|f(\cdot,t)\|_{X}$ belonging to $L^{q}(0,\,T)$.
Throughout this paper, we denote
\begin{align*}
     & B(r):=\{y\in \mathbb{R}^{3}|\,|y|< r\}, && & Q(r):=B(r)\times (T-r^2,T).
      \end{align*}
The average integral of a function $h$ on the ball $B(r)$ is defined by
$\overline{h}_{r}:=\frac{1}{|B(r)|}\int_{B(r)}h\,$. For simplicity, we write
$$\ba
\|\cdot\| _{L^{p}L^{q}(Q(r))}:=&\|\cdot\| _{L^{p}(T-r^{2},T;L^{q}(B(r)))} ~~~ \text{and}~~~
 \|\cdot\| _{L^{p}(Q(r))}:=\|\cdot\| _{L^{p}L^{p}(Q(r))}.
 \ea$$
  We will use the summation convention on repeated indices.
 $C$ is an absolute constant which may be different from line to line unless otherwise stated in this paper.
\subsection{Suitable weak solutions}

\begin{definition}\label{defweak}
A divergence-free vector field $U = (U_1,U_2,U_3)\in W^{1,2}_{\rm loc}(\R^3)$  is called a weak solution of \eqref{SNS}, if for all divergence-free vector field $\phi = (\phi_1,\phi_2,\phi_3) \in C^\infty_0(\R^3)$ one has
\begin{equation}\label{weakform}
\int_{\R^3}(  \nabla U\cdot\nabla \phi +[aU+a(y\cdot \nabla)U+(U\cdot\nabla) U]\cdot \phi) dy   = 0.
\end{equation}
\end{definition}

Now,    for the convenience of readers, we recall the \wred{classical definition of   suitable weak solutions} to the Navier-Stokes system \eqref{NS}.

	\begin{definition}\label{defi1}
		A  pair   $(u, \, \pi)$  is called a suitable weak solution to the Navier-Stokes equations \eqref{NS} provided the following conditions are satisfied,
		\begin{enumerate}[(1)]
			\item $u \in L^{\infty}(-T,\,0;\,L^{2}(\mathbb{R}^{3}))\cap L^{2}(-T,\,0;\,\dot{H}^{1}(\mathbb{R}^{3})),\,\pi\in
			L^{3/2}(-T,\,0;L^{3/2}(\mathbb{R}^{3}));$
			\item$(u, ~\pi)$~solves (\ref{NS}) in $\mathbb{R}^{3}\times (-T,\,0) $ in the sense of distributions;
			\item$(u, ~\pi)$ satisfies the following inequality, for a.e. $t\in[-T,0]$,
			\begin{align}
				&\int_{\mathbb{R}^{3}} |u(x,t)|^{2} \phi(x,t) dx
				+2\int^{t}_{-T}\int_{\mathbb{R} ^{3 }}
				|\nabla u|^{2}\phi  dxds\nonumber\\ \leq&  \int^{t}_{-T }\int_{\mathbb{R}^{3}} |u|^{2}
				(\partial_{s}\phi+\Delta \phi)dxds
				+ \int^{t}_{-T }
				\int_{\mathbb{R}^{3}}u\cdot\nabla\phi (|u|^{2} +2\pi)dxds, \label{loc}
			\end{align}
			where non-negative function $\phi(x,s)\in C_{0}^{\infty}(\mathbb{R}^{3}\times (-T,0) )$.\label{SWS3}
		\end{enumerate}
	\end{definition}

\begin{lemma}\cite{[HWZ]}\label{lem2}
Let $\Phi$ denote  the standard normalized fundamental solution of Laplace equation in $\mathbb{R}^{3}$. For $0<\xi<\eta$, we consider smooth cut-off function $\psi\in C^{\infty}_{0}(B(\f{\xi+3\eta}{4}))$ such that $0\leq\psi\leq1$ in $B(\eta)$, $\psi\equiv1$ in $B(\f{3\xi+5\eta}{8})$  and $|\nabla^{k}\psi |\leq C/(\eta-\xi)^{k}$ with $k=1,2$ in  $B(\eta)$.  Then we may split  pressure $\pi$ in \eqref{NS} as below
			\be\label{decompose pk}
			\pi(x):=\pi_{1}(x)+\pi_{2}(x)+\pi_{3}(x), \quad x\in B(\f{\xi+\eta}{2}),
			\ee
			where
$$\ba			\pi_{1}(x)=&-\partial_{i}\partial_{j} \Phi  \ast (u_{j}u_{i} \psi) ,\\
			\pi_{2}(x)
			=&2\partial_{i} \Phi  \ast (u_{j}u_{i}\partial_{j}\psi)- \Phi  \ast
			(u_{j}u_{i} \partial_{i}\partial_{j}\psi ) , \\
			\pi_{3}(x)
			=&2\partial_{i} \Phi  \ast (\pi \partial_{i}\psi )- \Phi  \ast (\pi \partial_{i}\partial_{i}\psi ).
			\ea
			$$
 Moreover, there holds
			\begin{align}
				& \| \pi_1\|_{L^{3/2}(Q(\f{\xi+\eta}{2}))}
				\leq C\|  u\|^{2}_{L^{3}(Q(\f{\xi+3\eta}{4}))};\label{p1estimate}\\
				& \|  \pi_2\|_{L^{3/2}(Q(\f{\xi+\eta}{2}))}
				\leq  \f{C\eta^{3}}{(\eta-\xi)^{3}}\|  u\|^{2}_{L^{3}(Q(\f{\xi+3\eta}{4}))};\label{p2estimate}\\
				& \|  \pi_3\|_{L^{1}L^{2}(Q(\f{\xi+\eta}{2}))}
				\leq \f{C\eta^{3/2 }}{(\eta-\xi)^{3}}\|\pi\|_{L^{1}(Q(\f{\xi+3\eta}{4}))}.\label{p3estimate}
			\end{align}
		\end{lemma}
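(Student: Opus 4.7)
The plan is to represent $\psi\pi$ through the Newtonian potential of its Laplacian, split the result into three pieces matching $\pi_1,\pi_2,\pi_3$, and then estimate each piece via Calder\'on--Zygmund theory (for $\pi_1$) and direct kernel bounds exploiting the separation between $x\in B(\tfrac{\xi+\eta}{2})$ and the support of $\nabla\psi$ (for $\pi_2,\pi_3$).

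Taking the divergence of the momentum equation in \eqref{NS} yields $-\Delta\pi=\partial_i\partial_j(u_iu_j)$. Combining $\psi(-\Delta\pi)=-\Delta(\psi\pi)+2\nabla\psi\cdot\nabla\pi+\pi\Delta\psi$ with $2\nabla\psi\cdot\nabla\pi=2\partial_i(\pi\,\partial_i\psi)-2\pi\Delta\psi$ gives
\[
-\Delta(\psi\pi) \;=\; \psi\,\partial_i\partial_j(u_iu_j) \;-\; 2\,\partial_i(\pi\,\partial_i\psi) \;+\; \pi\,\Delta\psi.
\]
Since $\pi\in L^{3/2}_{\rm loc}$ by Definition \ref{defi1} and $\psi$ is compactly supported, $\psi\pi$ is a compactly supported distribution, so convolution with $\Phi$ represents it as $-\Phi\ast\Delta(\psi\pi)$. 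For the first term on the right I would integrate by parts twice inside the convolution to move the two derivatives onto $\Phi$ and $\psi$; the symmetry of $u_iu_j$ in $(i,j)$ merges the two cross terms into the factor $2$ appearing in $\pi_2$, producing $\pi_1+\pi_2$. The remaining two terms give $\pi_3$ after one integration by parts in $\partial_i$. Since $\psi\equiv 1$ on $B(\tfrac{3\xi+5\eta}{8})\supset B(\tfrac{\xi+\eta}{2})$, this yields the decomposition \eqref{decompose pk}.

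Estimate \eqref{p1estimate} follows from the $L^{3/2}$-boundedness of the Calder\'on--Zygmund operator $\partial_i\partial_j\Phi\ast$ applied to the compactly supported function $\psi u_iu_j$, followed by H\"older and integration in time. For \eqref{p2estimate} and \eqref{p3estimate}, the crucial observation is that $\nabla\psi$ and $\nabla^2\psi$ are supported in the annulus $B(\tfrac{\xi+3\eta}{4})\setminus B(\tfrac{3\xi+5\eta}{8})$, whose distance from any $x\in B(\tfrac{\xi+\eta}{2})$ is at least $(\eta-\xi)/8$. Combining the kernel bounds $|\partial_i\Phi(x-y)|\leq C|x-y|^{-2}$, $|\Phi(x-y)|\leq C|x-y|^{-1}$ with $|\nabla\psi|\leq C/(\eta-\xi)$ and $|\nabla^2\psi|\leq C/(\eta-\xi)^2$ gives pointwise bounds $|\pi_2(x,t)|\leq C(\eta-\xi)^{-3}\int_{B(\frac{\xi+3\eta}{4})}|u|^2\,dy$ and $|\pi_3(x,t)|\leq C(\eta-\xi)^{-3}\|\pi(\cdot,t)\|_{L^1(B(\frac{\xi+3\eta}{4}))}$. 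H\"older converts the $L^2$-integral of $|u|^2$ into $C\eta\,\|u(\cdot,t)\|_{L^3(B)}^2$; taking $L^{3/2}$ (resp.\ $L^2$) norms on $B(\tfrac{\xi+\eta}{2})$ contributes an additional factor $\eta^2$ (resp.\ $\eta^{3/2}$), and integration in $t$ yields \eqref{p2estimate} and \eqref{p3estimate}.

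The main obstacle is the careful bookkeeping of the integrations by parts so that all coefficients and signs match---in particular, tracking that the two symmetric contributions $\partial_j\psi\,\partial_i$ and $\partial_i\psi\,\partial_j$ combine to give the factor $2$ in $\pi_2$ (and similarly for $\pi_3$), all consistent with the sign convention in which $\Phi$ satisfies $\Delta\Phi=\delta$.
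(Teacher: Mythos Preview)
The paper does not prove this lemma; it is quoted from \cite{[HWZ]} with no argument supplied. Your outline is the standard proof and is correct: the representation $\psi\pi=\Phi\ast\Delta(\psi\pi)$ combined with the product-rule expansion of $\Delta(\psi\pi)$ and of $\psi\,\partial_i\partial_j(u_iu_j)$ produces exactly $\pi_1+\pi_2+\pi_3$; the estimate for $\pi_1$ is Calder\'on--Zygmund on $L^{3/2}$; and the estimates for $\pi_2,\pi_3$ follow, as you say, from the separation $|x-y|\geq(\eta-\xi)/8$ between $x\in B(\tfrac{\xi+\eta}{2})$ and $y\in\operatorname{supp}\nabla\psi$, together with the bounds on $\nabla\psi,\nabla^2\psi$ and H\"older.

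One small bookkeeping slip worth fixing when you write it out: with the convention $\Delta\Phi=\delta$ that you adopt at the end, the representation formula reads $\psi\pi=\Phi\ast\Delta(\psi\pi)$, not $-\Phi\ast\Delta(\psi\pi)$. This sign choice is precisely what makes the minus sign in $\pi_1$ and the plus signs in $\pi_2,\pi_3$ come out as stated in the lemma; you have already flagged this as the delicate point, and once the convention is applied consistently the signs match.
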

\subsection{Local suitable weak solutions}
We begin with \wgr{the Wolf's} local pressure projection $\mathcal{W}_{p,\Omega}:$ $W^{-1,p}(\Omega)\rightarrow W^{-1,p}(\Omega)$ $(1<p<\infty)$.
 More precisely, for any  $f\in W^{-1,p}(\Omega)$, we define \wgr{$\mathcal{W}_{p,\Omega}(f)= \nabla\pi$}, where $\pi$ satisfies \eqref{GMS}.
Let $\Omega$  be a  bounded domain with $\partial\Omega\in C^{1}$.
According to the $L^p$ theorem of Stokes system in \cite[Theorem 2.1, p.149]{[GSS]},
there exists a unique pair $(u,\pi)\in W^{1,p}(\Omega)\times L^{p}(\Omega)$ such that
\be\label{GMS}
-\Delta u+\nabla\pi=f,~~ \text{div}\,u=0, ~~u|_{\partial\Omega}=0,~~ \int_{\Omega}\pi dx=0.
\ee
Moreover, this pair is subject to the inequality
$$
\|u\|_{\wgr{W^{1,p}}(\Omega)}+\|\pi\|_{\wgr{L^p}(\Omega)}\leq C\|f\|_{\wgr{W^{-1,p}}(\Omega)}.
$$
Let $\nabla\pi= \mathcal{W}_{p,\Omega}(f)$ $(f\in L^p(\Omega))$, then $\|  \pi\|_{L^p(\Omega)}\leq C\|f\|_{L^p(\Omega)},$ where we used the fact that $L^{p}(\Omega)\hookrightarrow W^{-1,p}(\Omega)$.  Moreover, from $\Delta \pi=\text{div}\,f$, we see that $\|  \nabla\pi\|_{L^p(\Omega)}\leq C(\|f\|_{L^p(\Omega)}+ \|  \pi\|_{L^p(\Omega)}) \leq C\|f\|_{L^p(\Omega)}.$
Now, we  present the definition of local suitable weak solutions to Navier-Stokes equations \eqref{NS}.
	\begin{definition}\label{defilsw}
		A  pair   $(u, \,\pi)$  is called a local suitable weak solution to the Navier-Stokes equations \eqref{NS} provided the following conditions are satisfied,
		\begin{enumerate}[(1)]
			\item $u \in L^{\infty}(-T,\,0;\,L^{2}(\mathbb{R}^{3}))\cap L^{2}(-T,\,0;\,\dot{H}^{1}(\mathbb{R}^{3})),\,\pi\in
			L^{3/2}(-T,\,0;L^{3/2}(\mathbb{R}^{3}));$\label{SWS1}
			\item$(u, ~\pi)$~solves (\ref{NS}) in $\mathbb{R}^{3}\times (-T,\,0) $ in the sense of distributions;\label{SWS2}
			\item The local energy inequality reads,
for a.e. $t\in[-T,0]$ and non-negative function $\phi(x,s)\in C_{0}^{\infty}(\mathbb{R}^{3}\times (-T,0) )$,
			 \begin{align}
  &\int_{B(R)}|v|^2\phi (x,t)  d  x+ \int^{t}_{-T }\int_{B(R)}\big|\nabla v\big|^2\wgr{\phi (x,s) dx ds}\nonumber\\  \leq&   \int^{t}_{-T }\int_{B(R)} | v |^2(  \Delta \phi +  \partial_{s}\phi )  d  x d s +\int^{t}_{-T }\int_{B(R)}|v|^{2}u\cdot\nabla \phi    dxds\nonumber\\
& +\int^{t}_{-T }\int_{B(R)} \phi ( u\otimes v : \nabla^{2}\pi_{h} )dxds   +\int^{t}_{-T }\int_{B(R)} \phi \pi_{1}v\cdot\nabla \phi   dxds+\int^{t}_{-T }\int_{B(R)} \phi \pi_{2}v\cdot\nabla \phi   dxds.\label{wloc1}
 \end{align}
\wgr{Here, $\nabla\pi_h=-\mathcal{W}_{p,B(R)}(u)$, $\nabla\pi_1=\mathcal{W}_{p,B(R)}(\Delta u)$, $\nabla\pi_2=-\mathcal{W}_{p,B(R)}(u\cdot\nabla u)$, $v=u+\nabla\pi_h$.}
 In addition, $ \nabla\pi_{h}, \nabla\pi_{1}$ and $\nabla\pi_{2}$ meet the following \wgr{facts}
	\begin{align}   &\|\nabla\pi_{h}\|_{L^p(B(R))}\leq  \wgr{C}\|u\|_{L^p(B(R))}, \label{ph}\\
 &\|\wgr{\pi_{1}}\|_{L^2(B(R))}\leq \wgr{C} \|\nabla u\|_{L^2(B(R))},\label{p1}\\
 &\|\wgr{\pi_{2}}\|_{L^{p/2}(B(R))}\leq \wgr{C} \| |u|^{2}\|_{L^{p/2}(B(R))}.\label{p2}
\end{align}	\end{enumerate}
	\end{definition}

We list some
interior estimates
of the harmonic equation $\Delta h=0$, which will be frequently utilized later. Let $1\leq p,q\leq\infty$ and $0<r<\rho$, then there holds
\be\label{h1}\|\nabla^{k}h\|_{L^{q}
(B(r))}\leq \f{Cr^{\f{3}{q}}}{(\rho-r)^{\f{3}{p}+k}}\|h\|_{L^{p}(B(\rho))}\wgr{,}\ee
\be\label{h2}
 \| h-\overline{h}_{r}\|_{L^{q}
(B(r))}\leq \f{Cr^{\f{3}{q}+1}}{(\rho-r)^{\f{3}{q}+1 }}\|h-\overline{h}_{\rho}\|_{L^{q}(B(\rho))}.\ee
 The proof of \eqref{h1} rests on the mean value property of harmonic functions. This together with
  mean value theorem leads to \eqref{h2}. We leave the details  to the reader.

\subsection{Meyer-Gerard-Oru  interpolation inequality}
For the convenience of the readers, we recall
Meyer-Gerard-Oru  interpolation inequality in \cite{[MGO],[PP]} below
\be\label{mgoi}
\|f\|_{L^{m}(\mathbb{R}^{n})}\leq C \|f\|^{\f2m}_{\dot{H}^{s}(\mathbb{R}^{n})}
 \|f\|^{1-\f2m}_{\dot{B}^{-\f{2s}{m-2}}_{\infty,\infty}(\mathbb{R}^{n})} ~~\text{with} ~~s>0~~\text{and}~~2<m<\infty.\ee
In addition,we have the following embedding relation between Morrey spaces and Besov spaces (see \cite[Section 3.2, Lemma 1]{[PP]})
\be\label{BM}
\|f\|_{\dot{B}^{-\f{3}{q}}_{\infty,\infty}(\mathbb{R}^{3})}\leq C\|f
\|_{\mathcal{\dot{M}}^{q,1}(\mathbb{R}^{3})} ~~\text{with} ~~1<q<\infty.
\ee
These two inequalities play  an important role in our proof.

\section{Proof of Theorem \ref{the1.1} }
In this section, we present the proof of Theorem \ref{the1.1}. In Subsection 3.1, we define $\Pi$ by $\mathcal{R}_i \mathcal{R}_j(U_{i}U_{j})$ and examine that it satisfies the equations \eqref{SNS}. In the second subsection we establish the energy bounds. Invoking energy bounds  and \eqref{local-energy}, we get the desired results in the last subsection.
\subsection{The construction of the pressure}
\begin{lemma} \label{cp}
Suppose that  $U$ is a weak solution of \eqref{SNS} and $U\in \mathcal{\dot{M}}^{2p,2q}(\mathbb{R}^{3})$ with $1<q\leq p <\infty$.
Let $\widetilde{\Pi}=\mathcal{R}_i \mathcal{R}_j(U_{i}U_{j})$. Then there holds
\begin{enumerate}[(1)]
			\item
  \be\label{pm}
\|\widetilde{\Pi}\|_{\mathcal{\dot{M}}^{p,q}(\mathbb{R}^{3})}\leq C \|U\|^{2}_{\mathcal{\dot{M}}^{2p,2q}(\mathbb{R}^{3})}.
\ee
 \item  $\widetilde{\Pi}$ meets
$$-\Delta \Pi=\partial_{i}\partial_{j}(U_{i}U_{j}),$$
in the distributional sense.

\item  $(U,\widetilde{\Pi})$ smoothly solves
\begin{equation}\label{prescon}
-\nu \Delta U+aU+a(y\cdot\nabla)U+(U\cdot\nabla)U+\nabla \Pi=0 \quad {\rm in} \,\,\, \mathbb{R}^3.
\end{equation}
Here the solution $\Pi$ is unique up to a constant.
\end{enumerate}
\end{lemma}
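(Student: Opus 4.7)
The plan is to handle the three conclusions in order, since each builds on the previous. For part (1), I would combine two classical facts: (a) a Hölder-type inequality in Morrey norms, which gives $\|U_iU_j\|_{\mathcal{\dot{M}}^{p,q}(\mathbb{R}^3)} \leq C\|U\|_{\mathcal{\dot{M}}^{2p,2q}(\mathbb{R}^3)}^{2}$ via pointwise Cauchy--Schwarz on each ball followed by application of the $\mathcal{\dot{M}}^{2p,2q}$ norm; and (b) the boundedness of the Riesz transforms on Morrey spaces for $1<q\leq p<\infty$ (a Calderón--Zygmund fact going back to Chiarenza--Frasca). Composing (a) and (b) yields \eqref{pm}.

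For part (2), since $\mathcal{R}_i = -\partial_i(-\Delta)^{-1/2}$ as Fourier multipliers, we may formally write $\widetilde{\Pi} = -\partial_i\partial_j(-\Delta)^{-1}(U_iU_j)$. Because $U_iU_j$ lies in $\mathcal{\dot{M}}^{p,q}(\mathbb{R}^3) \subset L^1_{\rm loc}(\mathbb{R}^3)$ with controlled growth at infinity, applying $-\Delta$ distributionally (i.e.\ testing against $\varphi \in \mathcal{S}(\mathbb{R}^3)$ and moving derivatives onto $\varphi$) produces the identity $-\Delta\widetilde{\Pi} = \partial_i\partial_j(U_iU_j)$ in $\mathcal{S}'(\mathbb{R}^3)$.

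For part (3), I would first invoke de Rham's theorem on the weak formulation \eqref{weakform}: the distribution
\[
F := -\Delta U + aU + a(y\cdot\nabla)U + (U\cdot\nabla)U
\]
annihilates all divergence-free test fields, so $F = -\nabla P$ for some distribution $P$ on $\mathbb{R}^3$. Taking the distributional divergence and using $\mathrm{div}\, U = 0$ throughout kills the $-\Delta U$, $aU$, and $a(y\cdot\nabla)U$ terms outright; for the convective term one finds $\partial_i(U_j\partial_j U_i) = \partial_i U_j\, \partial_j U_i = \partial_i\partial_j(U_iU_j)$. Thus $-\Delta P = \partial_i\partial_j(U_iU_j)$, and comparing with part (2) shows $h := P - \widetilde{\Pi}$ is harmonic on $\mathbb{R}^3$. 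A standard elliptic/Stokes bootstrap starting from the Morrey integrability of $U$ (the self-similar equation is subcritical in any $\mathcal{\dot{M}}^{p,q}$ by one derivative) makes $U$, $\widetilde{\Pi}$, $F$, and $P$ all smooth.

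The main obstacle is converting the harmonic difference $h$ into a constant, so that the pair $(U,\widetilde{\Pi})$ itself solves \eqref{prescon} strongly. Here one controls each component of $\nabla P = -F$ in a suitable local $L^{r}$ or Morrey class using the smoothness of $U$ together with the assumed Morrey norm of $U$, and combines this with the Morrey bound on $\nabla\widetilde{\Pi}$ from part (1) applied to first derivatives. This gives a quantitative decay/growth bound for $\nabla h$ on balls, and the interior derivative estimate \eqref{h1} for harmonic functions applied on expanding balls forces $\nabla h \equiv 0$, i.e.\ $h$ is constant. Hence $(U,\widetilde{\Pi})$ smoothly solves \eqref{prescon}. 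Uniqueness up to a constant is then trivial: if $\Pi_1,\Pi_2$ are two solutions of \eqref{prescon} for the same $U$, then $\nabla(\Pi_1-\Pi_2)=0$, so $\Pi_1-\Pi_2$ is a constant.
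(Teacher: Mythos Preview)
Parts (1) and (2) of your proposal are correct and essentially match the paper's argument, though you skim over a technical point in (2): since $U_iU_j$ lies only in a Morrey space and not in any global $L^r(\mathbb{R}^3)$, the identity $-\Delta\widetilde{\Pi}=\partial_i\partial_j(U_iU_j)$ in $\mathcal{S}'$ requires justification. The paper handles this by truncating to $\chi_{B(R)}U_iU_j$, verifying the identity there via the Fourier transform (the truncated function is in $L^s$ with $s=\min\{q,2\}$), and then passing to the limit using weighted Calder\'on--Zygmund theory with the $A_q$ weight $(1+|x|^2)^{-b/2}$ for suitable $3-3q/p<b<3$.

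Part (3) contains a genuine gap. You reduce correctly to showing that a harmonic object vanishes; the paper does the same, setting $F=-\Delta U+aU+a(y\cdot\nabla)U+(U\cdot\nabla)U+\nabla\widetilde{\Pi}$ (which equals $-\nabla h$ in your notation) and showing $F\equiv 0$. But your proposed method, bounding $\nabla h$ directly in some $L^r(B_R)$ as $R\to\infty$ and invoking the interior estimate \eqref{h1}, requires global growth control on $\Delta U$, $(y\cdot\nabla)U$, $(U\cdot\nabla)U$, and $\nabla\widetilde{\Pi}$. The hypothesis $U\in\dot{\mathcal{M}}^{2p,2q}$ gives no such control on derivatives of $U$, and part (1) bounds only $\widetilde{\Pi}$, not $\nabla\widetilde{\Pi}$; bootstrap yields local smoothness but no global Morrey bound on $\nabla U$ or $\Delta U$, and the linearly growing coefficient $y$ in $a(y\cdot\nabla)U$ obstructs any such global estimate. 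The paper circumvents this with a scaling argument: using the mean-value property of the harmonic $F$ it writes $D^\alpha F(0)=\pm\varepsilon^{3+|\alpha|}\int F(y)(D^\alpha\theta)(\varepsilon y)\,dy$ for a fixed radial $\theta\in C_0^\infty$, then integrates by parts term by term so that \emph{all} derivatives land on the rescaled test function. Each resulting integral is then bounded using only $\|U\|_{\dot{\mathcal{M}}^{2p,2q}}$ and $\|\widetilde{\Pi}\|_{\dot{\mathcal{M}}^{p,q}}$ and shown to be $O(\varepsilon^\delta)$ for some $\delta>0$, forcing $D^\alpha F(0)=0$ for every $\alpha$.
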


\begin{proof}
(1) The boundedness of
 singular integral operators $R_iR_j$  on Morrey spaces $\mathcal{\dot{M}}^{p,q}(\mathbb{R}^{3})$ with $1<q\leq p <\infty$ means \eqref{pm}.

 (2)
To show the second assertion, assume for a while, we have proved that
 \be\ba\label{cut}
\int_{\mathbb{R}^3} \mathcal{R}_i\mathcal{R}_j( \chi_{B(R)} U_{i}U_{j})  \Delta\varphi dx
=&- \int_{\mathbb{R}^3}\chi_{B(R)} U_iU_j  \partial_i\partial_j \varphi dx, ~~~\varphi\in C^{\infty}_{0}(\mathbb{R}^3),
\ea\ee
where  $\chi_{B(R)}$ is  the characteristic function of $B(R)$. The problem reduces to that of passing to the limit as  $R$ tends to infinity. Indeed, note that, for any $f\in \mathcal{\dot{M}}^{p,q}(\mathbb{R}^{3})$ and $b>3-\f{3q}{p}$,
$$
\int_{\mathbb{R}^{3}}|f|^{q}(1+|x|^{2})^{-\f{b}{2}}dx \leq
C \|f\|^{q}_{\mathcal{\dot{M}}^{p,q}(\mathbb{R}^{3})}.
$$
This fact can be found in \cite{[Kato]}.
To make our paper more self-contained and more readable, we present the proof of the above fact as follows
$$\ba
\int_{\mathbb{R}^{3}}|f|^{q}(1+|x|^{2})^{-\f{b}{2}}dx &\leq
  \int_{|x|\leq1}|f|^{q}(1+|x|^{2})^{-\f{b}{2}}dx
  +\int_{|x|>1}|f|^{q}(1+|x|^{2})^{-\f{b}{2}}dx.
\\&\leq    \int_{|x|\leq1}|f|^{q} dx+\sum_{k=0 }^{\infty}\int_{2^{k}<|x|\leq2^{k+1}}|f|^{q}
(1+|x|^{2})^{-\f{b}{2}}dx\\
&\leq  \|f\|^{q}_{\mathcal{\dot{M}}^{p,q}(\mathbb{R}^{3})}+C\sum_{k=0 }^{\infty}\big(2^{k+1}\big)^{-b-3(\f{q}{p}-1)} \|f\|^{q}_{\mathcal{\dot{M}}^{p,q}(\mathbb{R}^{3})}\\
&\leq  C\|f\|^{q}_{\mathcal{\dot{M}}^{p,q}(\mathbb{R}^{3})}.
\ea$$
In addition, note that $w(x)=(1+|x|^{2})^{-\f{b}{2}}$ satisfies the Muckenhoupt $A_{q}$-condition if $0\leq b<3$. Therefore, for $p<\infty$, we choose $b$ such that $3-\f{3 q}{p}<b<3$. Then, the H\"older inequality and the classical Calder\'on-Zygmund Theorem with $A_{q}$ weights yield that
$$\ba
&\left|\int_{\mathbb{R}^3}\B(\mathcal{R}_i\mathcal{R}_j(U_iU_j )-\mathcal{R}_i\mathcal{R}_j( \chi_{B(R)} U_iU_j )\B)  \Delta\varphi dx\right| \\
&\leq  \left(\int_{\mathbb{R}^3}\left|\mathcal{R}_i\mathcal{R}_j(U_iU_j- \chi_{B(R)} U_iU_j )\right|^q w(x) dx\right)^{\frac1q} \left(\int_{\mathbb{R}^3} |  \Delta\varphi|^{\f{q}{q-1}} w(x)^{\f{1}{1-q}} dx\right)^{1-\frac1q}\\
&\leq C \left(\int_{\mathbb{R}^3}\left| U_iU_j- \chi_{B(R)} U_iU_j  \right|^q w(x) dx\right)^{\frac1q} \left(\int_{\mathbb{R}^3} |  \Delta\varphi|^{\f{q}{q-1}} w(x)^{\f{1}{1-q}} dx\right)^{1-\frac1q}\\
&\leq C \|U_iU_j\|_{\mathcal{\dot{M}}^{p,q}(\mathbb{R}^{3})}
,
\ea$$
Likewise,
$$\ba
&\left|\int_{\mathbb{R}^3}\B(  U_iU_j  - \chi_{B(R)} U_iU_j \B)   \partial_{i}\partial_{j}\varphi dx\right| \\
&\leq C \|U_iU_j\|_{\mathcal{\dot{M}}^{p,q}(\mathbb{R}^{3})}.
\ea$$
Thus we can apply
the Lebesgue's dominated convergence theorem to obtain
$$
\lim_{R\rightarrow\infty}\int_{\mathbb{R}^3}\mathcal{R}_i\mathcal{R}_j( \chi_{B(R)} U_iU_j ) \Delta\varphi dx=\int_{\mathbb{R}^3}\mathcal{R}_i\mathcal{R}_j(U_iU_j )  \Delta\varphi dx,
$$
and
\be\ba\lim_{R\rightarrow\infty}
 \int_{\mathbb{R}^3} \chi_{B(R)} U_{i}U_{j}   \partial_i\partial_j\varphi dx = \int_{\mathbb{R}^3} U_iU_j  \partial_i\partial_j \varphi dx. \ea\ee
Combining this and \eqref{cut}, we know that
  \be\ba\label{cut2}
\int_{\mathbb{R}^3} \mathcal{R}_i\mathcal{R}_j(  U_{i}U_{j})  \Delta\varphi dx
=&- \int_{\mathbb{R}^3} U_iU_j  \partial_i\partial_j \varphi dx,
\ea\ee
which implies the second assertion. Subsequently, we need to prove \eqref{cut} we have assumed.
Since the Fourier transform is a topological isomorphism from
 $\mathcal{S}(\mathbb{R}^{n})$  onto itself, we conclude that $(\Delta\varphi)^{\vee}\in\mathcal{S}(\mathbb{R}^{3})$
 from $\Delta\varphi \in C_{0}^{\infty}(\mathbb{R}^{3})\subset\mathcal{S}(\mathbb{R}^{3})$.
Moreover, there holds $\Delta\varphi= ((\Delta\varphi)^{\vee})^{\wedge}$. Let $s=\min\{q,2\}$.
 According to the definition of the Fourier transform of  tempered distributions and
  $\mathcal{R}_i\mathcal{R}_j( \chi_{B(R)} U_{i}U_{j})\in L^{s}(\mathbb{R}^{3})\subset \mathcal{S}'(\mathbb{R}^{3})$, we discover
   \be\ba\label{cut1}
\int_{\mathbb{R}^3} \mathcal{R}_i\mathcal{R}_j( \chi_{B(R)} U_{i}U_{j})  ((\Delta\varphi)^{\vee})^{\wedge} dx=
\int_{\mathbb{R}^3} (\mathcal{R}_i\mathcal{R}_j( \chi_{B(R)} U_{i}U_{j})) ^{\wedge} (\Delta\varphi)^{\vee} dx.
\ea\ee
We recall the following fact (see \cite[p.76]{[Duoandikoetxea]}) that
 for any $f\in L^{m}(\mathbb{R}^{3})$ with $1<m\leq 2$,
\be\label{xfq}
(\mathcal{R}_i\mathcal{R}_jf)^{\wedge}(\xi)=
 -\f{\xi_{i}\xi_{j}}{|\xi|^{2}}\hat{f}(\xi), ~~~ \xi\in\mathbb{R}^{3}.
 \ee
From
\eqref{cut1} and
\eqref{xfq}, we observe that
 \be\ba\label{cut5}
\int_{\mathbb{R}^3} \mathcal{R}_i\mathcal{R}_j( \chi_{B(R)} U_{i}U_{j})  \Delta\varphi dx
=& \int_{\mathbb{R}^3}( \mathcal{R}_i\mathcal{R}_j( \chi_{B(R)} U_{i}U_{j})) ^{\wedge} (\Delta\varphi)^{\vee} dx
\\
=&- \int_{\mathbb{R}^3} \f{\xi_{i}\xi_{j}}{|\xi|^{2}}(  \chi_{B(R)} U_{i}U_{j} ) ^{\wedge}( |\xi|^{2}\varphi^{\vee}) dx
\\
=&- \int_{\mathbb{R}^3}  \chi_{B(R)} U_{i}U_{j} (\xi_{i}\xi_{j}\,\varphi^{\vee} ) ^{\wedge} dx\\
=&- \int_{\mathbb{R}^3}\chi_{B(R)} U_iU_j  \partial_i\partial_j \varphi dx.
\ea\ee
This confirms \eqref{cut}.

Next, we turn our attention to demonstrating  \eqref{prescon}. Before going further, we write
\begin{equation*}
F:= -\nu \Delta U+aU+a(y\cdot\nabla)U+(U\cdot\nabla)U+\nabla \widetilde{\Pi}
\end{equation*}
 and show that $F\equiv 0.$

It follows from  $F=\nabla \widetilde{\Pi}-\nabla \Pi$, div $F=0$ and curl $F=0$ that $\Delta F=0$. Since harmonic functions are analytic, to get $F\equiv0$, it suffices to show that $D^{\alpha}F(0)=0$
for any multi-index $\alpha=(\alpha_1,\alpha_2,\alpha_3)$ with $|\alpha|\geq 0$.
Let $\theta\in C^{\infty}_{0}(\mathbb{R}^{3})$ be any radial function which is supported in $\{x\in \mathbb{R}^{3}|\,|x|<1\}$ and has integral 1. Note that $D^{\alpha}F$ is also harmonic, there holds
\begin{equation}\label{MVP}
D^{\alpha}F(0)=\varepsilon^3\int_{\mathbb{R}^3} D^{\alpha}F(y)\,\theta(\varepsilon y)dy
\end{equation}
for any $\varepsilon>0$. (See \cite[p.275]{[Stein1]} for the details.)

  Integrating by parts, one computes
\begin{equation*}
D^\alpha F(0)=\varepsilon^3\int_{\mathbb{R}^3} D^\alpha F(y)\,\theta(\varepsilon y)dy =(-1)^{|\alpha|}\varepsilon^3\int_{\mathbb{R}^3}  F(y) \varepsilon^{|\alpha|} (D^\alpha\theta)(\varepsilon y)dy.
\end{equation*}
According to this, to show that $D^\alpha F(0)=0$, it suffices to prove that for any $\varphi\in C^{\infty}_{0}(B(1))$,
$$\lim_{\varepsilon\to 0^+}\varepsilon^3\int_{\mathbb{R}^3}F(y)\varphi(\varepsilon y)dy=0.
$$
Integration by parts twice, the H\"older inequality and the definition of Morrey spaces guarantee that
$$\ba
\left|\varepsilon^{3}\int_{\mathbb{R}^{3}}\Delta U\varphi(\varepsilon y)dy\right|=&\left|\varepsilon^{5}\int_{\mathbb{R}^{3}} U\Delta\varphi(\varepsilon y)dy\right|\\
\leq& \varepsilon^{5} \B(\int_{|y|<\f{1}{\varepsilon}}|U|^{2q}dy\B)^{\f{1}{2q}}
\B(\int_{|y|<\f{1}{\varepsilon}}|\Delta\varphi(\varepsilon y)|^{\f{2q}{2q-1}}dy\B)^{1-\f{1}{2q}}\\
\leq& C \varepsilon^{2+\f{3}{2p}}
\|U\|_{\mathcal{\dot{M}}^{2p,2q}(\mathbb{R}^{3})}.
\ea$$
A slight modification of the latter argument yields
$$\ba
\left|\varepsilon^{3}\int_{\mathbb{R}^{3}}U\varphi(\varepsilon y)dy\right| \leq& \,\varepsilon^{3} \B(\int_{|y|<\f{1}{\varepsilon}}|U|^{2q}dy\B)^{\f{1}{2q}}
\B(\int_{|y|<\f{1}{\varepsilon}}|\varphi(\varepsilon y)|^{\f{2q}{2q-1}}dy\B)^{1-\f{1}{2q}}\\
\leq& C \varepsilon^{\f{3}{2p}}
\|U\|_{\mathcal{\dot{M}}^{2p,2q}(\mathbb{R}^{3})}.
\ea$$
In virtue of  integration
by parts once again, we see that
\be\ba
\varepsilon^3\int_{\mathbb{R}^{3}}(y\cdot\nabla) U(y)\varphi(\varepsilon y) dy = -\varepsilon^3\int_{\mathbb{R}^{3}} 3U(y)\varphi(\varepsilon y)dy- \varepsilon^3 \int_{\mathbb{R}^{3}} U(y)\{(\varepsilon y)\cdot \nabla\varphi(\varepsilon y)\}dy.
\ea\ee
In the same manner as above, we can also bound $\left|\varepsilon^3\int_{\mathbb{R}^{3}}(y\cdot\nabla) U(y)\varphi(\varepsilon y) dy\right|$.

On the other hand, it follows from the divergence-free condition that
\be\ba
&\varepsilon^3\int_{\mathbb{R}^{3}}(U\cdot\nabla) U(y)\varphi(\varepsilon y) dy\\
 =& \varepsilon^3\int_{\mathbb{R}^{3}} \text{div}\,U\otimes U(y)\varphi(\varepsilon y)dy\\
=& -\varepsilon^{4}\int_{\mathbb{R}^{3}}  U\otimes U(y)\cdot\nabla\varphi(\varepsilon y)dy,
\ea\ee
which in turn implies that
\be\label{UUe}\ba
&\left|\varepsilon^3\int_{\mathbb{R}^{3}}(U\cdot\nabla) U(y)\varphi(\varepsilon y) dy\right|\\
\leq& \varepsilon^{4} \B(\int_{|y|<\f{1}{\varepsilon}}|U|^{2q}dy\B)^{\f1q}
\B(\int_{|y|<\f{1}{\varepsilon}}|\nabla\varphi(\varepsilon y)|^{\f{q}{q-1}}dy\B)^{1-\f1q}\\
\leq& C \varepsilon^{1+\f{3}{p}}
\|U\|^{2}_{\mathcal{\dot{M}}^{2p,2q}(\mathbb{R}^{3})}.
\ea\ee
Eventually, we need to bound the last term involving $\nabla\widetilde{\Pi}$. Integration by parts gives
\be\ba
\varepsilon^3\int_{\mathbb{R}^{3}}\nabla \widetilde{\Pi}(y)\,\varphi(\varepsilon y) dy
  =-\varepsilon^{4}\int_{\mathbb{R}^{3}}\widetilde{\Pi}(y)\,\nabla\varphi(\varepsilon y)dy.
\ea\ee
Furthermore, a variant of (\ref{UUe}) provides the estimate
$$
\left|\varepsilon^{4}\int_{\mathbb{R}^{3}} \widetilde{\Pi}(y)\,\nabla\varphi(\varepsilon y)dy\right|
\leq C \varepsilon^{1+\f{3}{p}}\|\widetilde{\Pi}\|_{\mathcal{\dot{M}}^{p,q}(\mathbb{R}^{3})} \leq C \varepsilon^{1+\f{3}{p}}\|U\|^{2}_{\mathcal{\dot{M}}^{2p,2q}(\mathbb{R}^{3})}.
$$
This verifies $D^\alpha F(0)=0$ and completes the proof of Lemma \ref{cp}.
\end{proof}
\subsection{Energy bounds }\label{sec4}

\begin{prop}\label{the1.2}Assume that  $(u,  \pi)$ is a suitable weak solution to the 3D Navier-Stokes system \eqref{NS}. Let $\alpha=2/p+3/q$ and the pair  $(p,q)$ satisfy
 \be\label{pqcondi}
1\leq2/p+3/q<2, ~~~\text{with}~~~ 1<p<\infty.
 \ee
 Then there holds, for any $m\geq1$,
\be\ba \label{key ineq}
&\|u\|^2_{L^{\infty}(T-\f{R^{2}}{4},T;L^{2}(B(R/2)))}+\|\nabla u\|^2_{L^{2}(T-\f{R^{2}}{4},T;L^{2}(B(R/2)))} \\ \leq&    CR^{3-2\alpha} \|u\|_{L^{p}(T-R^{2},T;\dot{\mathcal{M}}^{q,1} (B(R)))} ^{2}\\& +  CR^{\f{6-5\alpha}{2-\alpha}} \|u\|_{L^{p}(T-R^{2},T;\dot{\mathcal{M}}^{q,1} (B(R)))}  ^{\f{4}{2-\alpha}} + CR^{1-\f{6}{m}} \|\pi\|^{2}_{L^{1}(T-R^{2},T;\dot{\mathcal{M}}^{m,1} (B(R)))}.
\ea\ee
\end{prop}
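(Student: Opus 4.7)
The starting point is the local energy inequality \eqref{loc} with a product cutoff $\phi(x,s)=\eta(x)\chi(s)$ adapted to the cylinder $Q(R)$: $\eta\in C_0^\infty(B(R))$ equal to $1$ on $B(R/2)$, and $\chi\in C_0^\infty((T-R^2,T])$ equal to $1$ on $(T-R^2/4,T]$, with $|\nabla^k\eta|\lesssim R^{-k}$ and $|\chi'|\lesssim R^{-2}$. Plugging $\phi$ into \eqref{loc} and taking the supremum in $t$ produces the left-hand side of \eqref{key ineq}, while on the right-hand side one encounters the linear flux $\iint|u|^2(\partial_s\phi+\Delta\phi)$, the cubic flux $\iint|u|^2 u\cdot\nabla\phi$, and the pressure flux $\iint 2\pi\, u\cdot\nabla\phi$. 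The linear flux is immediately bounded by $R^{-2}\|u\|_{L^2(Q(R))}^2$, which after a Hölder/Morrey embedding on $B(R)$ fits into the first term $R^{3-2\alpha}\|u\|_{L^p(\dot{\mathcal{M}}^{q,1})}^2$ of the stated bound.

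The heart of the argument is the cubic flux, for which it suffices to control $\iint |u|^3$ over an intermediate ball. Here I would combine Meyer-Gerard-Oru \eqref{mgoi} with the Morrey-Besov embedding \eqref{BM}. Working on a spatial cutoff $\zeta u$ with $\zeta$ localized to a slightly larger ball than $B(R/2)$ and applying \eqref{mgoi} with $m=3$ and $s=3/(2q)$ (so that $2s/(m-2)=3/q$) yields, modulo controllable error terms generated by $\nabla\zeta$,
$$\|u\|_{L^3(B)}^3 \;\le\; C\,\|\zeta u\|_{\dot H^s}^2\,\|u\|_{\dot{\mathcal{M}}^{q,1}(\mathbb{R}^3)}.$$
Interpolating $\|\zeta u\|_{\dot H^s}\le C\|\zeta u\|_{L^2}^{1-s}\|\nabla(\zeta u)\|_{L^2}^s$ and then Hölder in time with the exponents dictated by $\alpha=2/p+3/q$ leads to
$$\int_{T-R^2}^T\!\|u\|_{L^3(B)}^3\,ds \;\le\; C\,E^{\,3/2-1/\alpha}\,\|u\|_{L^p(T-R^2,T;\dot{\mathcal{M}}^{q,1}(B(R)))}^{2/\alpha},$$
where $E:=\sup_t\|u\|_{L^2(B(R))}^2+\|\nabla u\|_{L^2(Q(R))}^2$ is the energy on $Q(R)$. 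Young's inequality then splits this into an $\varepsilon E$ piece, absorbed into the LHS after a standard iteration on radii, and a piece of the form $CR^{(6-5\alpha)/(2-\alpha)}\|u\|_{L^p\dot{\mathcal{M}}^{q,1}}^{4/(p(2-\alpha))}$, giving the second term in \eqref{key ineq}.

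For the pressure flux I would invoke the decomposition of Lemma \ref{lem2} with $\xi,\eta$ chosen between $R/2$ and $R$. The local pieces $\pi_1,\pi_2$ satisfy \eqref{p1estimate}--\eqref{p2estimate} and therefore reduce to cubic-in-$u$ terms already controlled by the argument above. The harmonic remainder $\pi_3$, being harmonic on $B((\xi+\eta)/2)\supset\mathrm{supp}\,\nabla\eta$, is handled by the interior estimate \eqref{h1} combined with Hölder inequality to produce $\|\pi_3\|_{L^\infty(\mathrm{supp}\,\nabla\eta)}\lesssim R^{-3/m}\|\pi\|_{\dot{\mathcal{M}}^{m,1}(B(R))}$. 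Pairing this with $\|u\nabla\eta\|_{L^1(B(R))}\lesssim R^{1/2}\|u\|_{L^2(B(R))}$, integrating in time, and applying a final Young's inequality yields the third term $CR^{1-6/m}\|\pi\|_{L^1\dot{\mathcal{M}}^{m,1}}^2$ (together with an $\varepsilon E$ piece absorbed on the left).

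The main obstacle will be the localization of \eqref{mgoi}, which is stated on $\mathbb{R}^n$: one must check that the commutator-type errors $[\Lambda^s,\zeta]u$ and the boundary contributions generated by $\nabla\zeta$ are dominated by the same energy and Morrey quantities on $B(R)$, without producing unwanted powers of $R$. Once this localized Meyer-Gerard-Oru interpolation is in place, the remainder of the proof is purely algebraic: Young's inequality together with the identity $\alpha=2/p+3/q$ produces precisely the three scaling powers $R^{3-2\alpha}$, $R^{(6-5\alpha)/(2-\alpha)}$, and $R^{1-6/m}$ displayed in \eqref{key ineq}.
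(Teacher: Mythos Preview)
Your overall architecture---local energy inequality, Meyer--Gerard--Oru combined with the Morrey--Besov embedding, pressure decomposition via Lemma~\ref{lem2}, and iteration on radii---matches the paper's. But the specific way you apply \eqref{mgoi} contains a genuine gap.

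You take $m=3$ and $s=3/(2q)$ in \eqref{mgoi}, obtaining
\[
\|u\|_{L^3}^3 \le C\,\|\zeta u\|_{\dot H^{3/(2q)}}^{2}\,\|u\|_{\dot{\mathcal M}^{q,1}},
\]
and then interpolate $\dot H^{3/(2q)}$ between $L^2$ and $\dot H^1$. The resulting pointwise-in-time bound is $\|u\|_{L^3}^3\le C\|u\|_{L^2}^{2-3/q}\|\nabla u\|_{L^2}^{3/q}\|u\|_{\dot{\mathcal M}^{q,1}}$, and after H\"older in time this gives
\[
\int_{T-R^2}^{T}\|u\|_{L^3}^3\,dt \;\le\; C\,E^{1}\cdot R^{\,2-\alpha}\,\|u\|_{L^p\dot{\mathcal M}^{q,1}},
\]
not $E^{3/2-1/\alpha}$ as you assert. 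Since the energy enters linearly, Young's inequality cannot split off an $\varepsilon E$ piece; the cubic flux becomes $CR^{1-\alpha}\|u\|_{L^p\dot{\mathcal M}^{q,1}}\cdot E$, and absorbing it would require smallness of the Morrey norm, which is not assumed. The paper avoids this by first H\"oldering \emph{spatially} from $L^3$ up to $L^m$ on the ball with $m=\tfrac{6\alpha}{3\alpha-2}>3$ (see \eqref{mxianzhi}), and only then applying \eqref{mgoi} with that $m$ and $s=\tfrac{3(m-2)}{2q}$. This choice makes the total energy exponent $\tfrac{3}{m}=\tfrac{3}{2}-\tfrac{1}{\alpha}<1$, which is exactly what permits Young's inequality and the iteration lemma to close; see the derivation of \eqref{keyineq} and the estimates \eqref{last2}.

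A second, smaller issue: your bound on the linear flux via $R^{-2}\|u\|_{L^2(Q(R))}^2\lesssim R^{3-2\alpha}\|u\|_{L^p\dot{\mathcal M}^{q,1}}^2$ does not follow directly, because membership in $\dot{\mathcal M}^{q,1}$ controls only $L^1$ on balls, not $L^2$. The paper instead H\"olders the linear term up to $\|u\|_{L^3(Q)}^2$ (term $I$ in \eqref{last3}) and then feeds it through the same interpolation inequality \eqref{keyineq}.
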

\begin{remark}
We refer the readers to \cite{[GP]} and \cite{[HWZ]} for recent progress on energy bounds of suitable weak solutions.
\end{remark}
In the spirit of \cite{[GP],[HWZ]}, we conclude
$\varepsilon$-regularity criteria at one scale in Morrey spaces from the above energy bounds.  Previous  related results in  Lorentz spaces  can be found in \cite{[WWY],[Baker]}. In addition, a summary of
$\varepsilon$-regularity criteria at one scale is given in \cite{[HWZ]}.
\begin{coro}\label{the1.3}
		Let  the pair $(u,  \pi)$ be a suitable weak solution to the 3D Navier-Stokes system \eqref{NS} in $Q(1)$.
		There exists an absolute positive constant $\varepsilon_{1}$
		such that if the pair $(u,\pi)$ satisfy	 \be\label{0il1}\ba
&\|u\|_{ L^{p }(-1,0;\mathcal{\dot{M}}^{q,1}(B(1))) }+\|\pi\|_{ L^{1}(-1,0;\mathcal{\dot{M}}^{m,1}(B(1))) }\leq \varepsilon_{1},\ea\ee
where
$$1\leq 2/p+3/q <2, 1< p<\infty ~\text{and}~ m\geq1,$$
	 then  $u\in L^{\infty}(Q(1/2)).$
	\end{coro}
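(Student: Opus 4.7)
The plan is to deduce Corollary \ref{the1.3} from the energy bound \eqref{key ineq} of Proposition \ref{the1.2} by converting smallness of the spatial Morrey norms into smallness of a scale-invariant $L^{2}$-norm of $\nabla u$ on a parabolic cylinder, and then invoking the classical Caffarelli--Kohn--Nirenberg one-scale regularity criterion in its gradient form.

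By translation invariance of the Navier--Stokes system and a covering argument, it suffices to show that $u$ is bounded in a neighborhood of an arbitrary point $(x_{0},t_{0})\in Q(1/2)$. Fix such a point; the translated cylinder $Q(x_{0},t_{0};1/2)$ lies in $Q(1)$, so the smallness hypothesis \eqref{0il1} is inherited on it. Applying the translated version of Proposition \ref{the1.2} with $R=1/2$ yields
\begin{equation*}
\|\nabla u\|^{2}_{L^{2}(Q(x_{0},t_{0};1/4))}\leq C\bigl(\varepsilon_{1}^{2}+\varepsilon_{1}^{4/(2-\alpha)}+\varepsilon_{1}^{2}\bigr),
\end{equation*}
which is small in $\varepsilon_{1}$: the constraint $1\leq 2/p+3/q<2$ keeps the exponent $4/(2-\alpha)$ finite and strictly positive, while the fixed scale $R=1/2$ absorbs every $R$-power from \eqref{key ineq} into the constant $C$ regardless of its sign.

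Consequently, for $\varepsilon_{1}$ small enough the scale-invariant CKN quantity $r^{-1}\iint_{Q(x_{0},t_{0};r)}|\nabla u|^{2}\,dx\,dt$ evaluated at $r=1/4$ falls below the threshold $\varepsilon_{\mathrm{CKN}}$ of the classical Caffarelli--Kohn--Nirenberg one-scale regularity criterion, which implies that $u$ is H\"older continuous, hence bounded, in a neighborhood of $(x_{0},t_{0})$. The covering argument then concludes $u\in L^{\infty}(Q(1/2))$.

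The main technical obstacle is verifying that the $R$-powers in \eqref{key ineq}, some of which are negative (for example $3-2\alpha<0$ when $\alpha>3/2$, $(6-5\alpha)/(2-\alpha)<0$ when $\alpha>6/5$, and $1-6/m<0$ when $m<6$), do not destroy the argument; fixing $R=1/2$ neatly sidesteps this, since these exponents only contribute numerical constants. A secondary subtlety is the role of the pressure in \eqref{0il1}: its smallness is essential to absorb the third term in \eqref{key ineq}, but once the energy is controlled, the CKN step is pressure-free and no further manipulation of $\pi$ via the decomposition of Lemma \ref{lem2} is required. If one prefers the classical $L^{3}+L^{3/2}$ version of the CKN criterion, the energy bound combined with the parabolic embedding $L^{\infty}L^{2}\cap L^{2}\dot{H}^{1}\hookrightarrow L^{10/3}$ produces smallness of $\|u\|_{L^{3}(Q(x_{0},t_{0};1/4))}$, and the pressure decomposition of Lemma \ref{lem2} together with the interior harmonic estimate \eqref{h1} for $\pi_{3}$ produces smallness of $\|\pi\|_{L^{3/2}(Q(x_{0},t_{0};r))}$ for some $r<1/4$; the conclusion is reached identically.
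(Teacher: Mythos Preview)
Your primary route has a genuine gap: there is no ``classical Caffarelli--Kohn--Nirenberg one-scale regularity criterion in its gradient form.'' The gradient criterion in \cite{[CKN]} (their Proposition~2) is a $\limsup$ statement---it requires $r^{-1}\iint_{Q(r)}|\nabla u|^{2}\,dx\,dt$ to be small for \emph{all} sufficiently small $r$, not at a single scale. The genuine one-scale CKN result (Proposition~1 in \cite{[CKN]}) involves $r^{-2}\iint(|u|^{3}+|\pi|^{3/2})$, i.e.\ it needs the pressure. Having only $\|\nabla u\|_{L^{2}(Q(1/4))}$ small at the fixed scale $r=1/4$ does not by itself give Hölder continuity of $u$; without a quantitative bound on $\|u\|_{L^{2}}$ or on $\pi$, the mean of $u$ is uncontrolled and the local energy inequality cannot be iterated downward. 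So the step ``falls below $\varepsilon_{\mathrm{CKN}}$ of the one-scale gradient criterion $\Rightarrow$ $u$ is Hölder'' is not justified.

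Your alternative route in the last paragraph is correct and is essentially what the paper does, with one simplification on the paper's side. The paper invokes the one-scale criterion \eqref{j0il1} from \cite{[HWZ]}, which only asks for $\|u\|_{L^{p}L^{q}}+\|\pi\|_{L^{1}}$ to be small; thus the pressure is handled in a single line by the trivial Morrey-to-$L^{1}$ bound $\|\pi\|_{L^{1}(B(r))}\leq Cr^{3-3/m}\|\pi\|_{\dot{\mathcal{M}}^{m,1}(B(r))}$, and the velocity part follows from the energy bound \eqref{key ineq} plus the embedding $L^{\infty}L^{2}\cap L^{2}\dot{H}^{1}\hookrightarrow L^{10/3}$. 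Your version instead aims for the classical CKN quantity $|u|^{3}+|\pi|^{3/2}$, which forces you to upgrade $\pi$ from $L^{1}$ to $L^{3/2}$ via Lemma~\ref{lem2} and interior harmonic estimates for $\pi_{3}$; this works but is a detour that the citation of \cite{[HWZ]} avoids. Either way, the essential mechanism---energy bound \eqref{key ineq} reduces the Morrey hypothesis to a known Lebesgue-space $\varepsilon$-criterion on a smaller cylinder---is the same.
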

\begin{remark}
This result extends regularity criteria via Lebesgue spaces in \cite{[HWZ]} to Morrey spaces.
We refer the readers to \cite{[GKT]} for various $\varepsilon$-regularity criteria   at all scales in Lebesgue spaces.
\end{remark}
For abbreviation, we set
$$\ba
 E(r)=\sup_{T-r^{2}\leq t\leq T}\|u(\cdot,t)\|^{2}_{L^{2}(B(r))}+\int_{T-r^{2}}^{T}\|\nabla u\|^{2}_{L^{2}(B(r))}dt .
\ea$$
\begin{lemma}\label{ineq}
For $0<\xi<\eta$, let $r=\f{\xi+3\eta}{4}$ and $\alpha=2/p+3/q$ with the pair $(p,q)$ satisfying
\eqref{pqcondi}. Then
there  exits  an absolute constant $C$  independent of  $\xi$ and $\eta$,~ such that
\be\ba\label{keyineq}
 \int_{T-r^{2}}^{T}\|u\|^{3}_{L^{3}(B(r))}dt
 \leq  C \eta^{\f{3(2-\alpha)}{2\alpha} } \B(1+\f{\eta^{2}}{(\eta-\xi)^{2}}\B)^{\f32-\f{1}{\alpha}}E^{\f32-\f{1}{\alpha}}(\eta)
\B(\int_{T-r^{2}}^{T}\|u\|^{p}_{\dot{\mathcal{M}}^{q,1}(B(\eta))}dt\B)^{\f{2}{p\alpha}}.
\ea\ee
\end{lemma}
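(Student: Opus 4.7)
My plan is to localize the Meyer--Gerard--Oru interpolation inequality \eqref{mgoi} via a smooth spatial cutoff, convert the negative Besov factor into the Morrey norm on $B(\eta)$ through the embedding \eqref{BM}, and close the estimate with a single application of H\"older's inequality in time. The parameters $(m,s)$ in \eqref{mgoi} will be forced by the requirement that the three exponents produced---one from the spatial H\"older that lifts $L^3$ to $L^m$, one from the Sobolev interpolation of $\dot H^s$ between $L^2$ and $\dot H^1$, and one from the time H\"older---match exactly the three target exponents in \eqref{keyineq}.

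Concretely, I would set $m=6\alpha/(3\alpha-2)$ and $s=3(m-2)/(2q)=6/(q(3\alpha-2))$, which are the unique values compatible with all matching constraints; under \eqref{pqcondi} one checks $m\geq 3$ and $s\leq 1$. Choosing $\chi\in C_0^\infty(B(\eta))$ with $\chi\equiv 1$ on $B(r)$ and $|\nabla\chi|\leq C/(\eta-\xi)$, the spatial H\"older inequality gives $\|u\|_{L^3(B(r))}\leq C\eta^{1-3/m}\|u\chi\|_{L^m(\mathbb{R}^3)}$, and then \eqref{mgoi} together with \eqref{BM} yields
\[
\|u\chi\|_{L^m(\mathbb{R}^3)}\leq C\|u\chi\|_{\dot H^s(\mathbb{R}^3)}^{2/m}\,\|u\|_{\dot{\mathcal{M}}^{q,1}(B(\eta))}^{1-2/m}.
\]
The $\dot H^s$ factor I would handle by the elementary interpolation $\|u\chi\|_{\dot H^s}\leq C\|u\chi\|_{L^2}^{1-s}\|u\chi\|_{\dot H^1}^{s}$ combined with the Leibniz cutoff estimate $\|u\chi\|_{\dot H^1}\leq \|\nabla u\|_{L^2(B(\eta))}+\frac{C}{\eta-\xi}\|u\|_{L^2(B(\eta))}$. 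Cubing the resulting pointwise-in-$t$ inequality and splitting $(a+b)^{3s}\leq C(a^{3s}+b^{3s})$ produces a \emph{main} piece carrying $\|\nabla u\|_{L^2(B(\eta))}^{6s/m}$ and a \emph{boundary} piece carrying $(\eta-\xi)^{-6s/m}\|u\|_{L^2(B(\eta))}^{6s/m}$.

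To finish, I would integrate over $t\in(T-r^2,T)$, pull $\sup_t\|u\|_{L^2(B(\eta))}$ out of the time integral, and apply H\"older in time with the conjugate pair $(m/(3s),\,p/(3-6/m))$; the conjugacy relation $3s/m+(3-6/m)/p=1$ is---by the definition of $m$---exactly the algebraic identity $\alpha=2/p+3/q$. The same value of $m$ also produces the exponent identities $3/m=3/2-1/\alpha$, $3-9/m=3(2-\alpha)/(2\alpha)$, and $2-4/(p\alpha)=6/(q\alpha)$; the last of these is what collapses the boundary factor $\eta^{2-4/(p\alpha)}(\eta-\xi)^{-6/(q\alpha)}$ produced by the time H\"older on the boundary piece into the clean $(\eta/(\eta-\xi))^{6/(q\alpha)}$. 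Bounding $A^2\leq E(\eta)$ and $\int\|\nabla u\|^2_{L^2}\,dt\leq E(\eta)$ absorbs both pieces into $E(\eta)^{3/2-1/\alpha}$, and dominating $1+(\eta/(\eta-\xi))^{6/(q\alpha)}$ by $C(1+\eta^2/(\eta-\xi)^2)^{3/2-1/\alpha}$---valid because $6/(q\alpha)\leq 3-2/\alpha$, equivalent to $s\leq 1$---delivers \eqref{keyineq}. The principal difficulty is not any single analytic step but the coordinated bookkeeping: the specific choice of $m$ above is the unique value making all three H\"older pairings sharp simultaneously, so every numerical factor in \eqref{keyineq} is forced by the matching constraints.
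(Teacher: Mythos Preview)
Your strategy is the paper's: the same choice $m=6\alpha/(3\alpha-2)$, the same application of \eqref{mgoi} followed by the embedding \eqref{BM}, the same interpolation of $\dot H^{s}$ between $L^{2}$ and $\dot H^{1}$, the same cutoff localization, and the same H\"older pairing in time.

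There is, however, one genuine gap. You assert that under \eqref{pqcondi} one has $s\le 1$, and you rely on this both for the interpolation $\|u\chi\|_{\dot H^s}\le C\|u\chi\|_{L^2}^{1-s}\|u\chi\|_{\dot H^1}^{s}$ and for dominating the boundary factor in your final paragraph. But $s=6/\bigl(q(3\alpha-2)\bigr)\le 1$ is equivalent to $\alpha\ge \tfrac{2}{3}+\tfrac{2}{q}$, which \eqref{pqcondi} does \emph{not} guarantee: for instance $(p,q)=(8,4)$ satisfies \eqref{pqcondi} with $\alpha=1$, yet $s=3/2>1$. The paper closes this gap with a one-line reduction at the very start of its proof: for any $(p,q)$ obeying \eqref{pqcondi} one may, by H\"older in the time variable alone, replace $p$ by some $p'<p$ so that $\alpha'=2/p'+3/q$ is as close to $2$ as one likes; then $s'=6/\bigl(q(3\alpha'-2)\bigr)$ is close to $3/(2q)<1$ (note $q>3/2$ is forced by $\alpha<2$), and in particular $s'\le 1$. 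With this reduction inserted, the rest of your argument goes through and coincides with the paper's.
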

\begin{proof}
For any  pair $(p,q)$ satisfying \eqref{pqcondi}, we can
select $p'<p$ such that the value of $\f{2}{p'}+\f3q$ is very close to $2$ and smaller than $2$. Due to the H\"older inequality only in time direction, it is enough to consider the case that $\f2p+\f3q$ is close to 2.
To proceed further, we set
$$m=\f{6\alpha}{3\alpha-2}.$$
Taking $\alpha\rightarrow 2^{-}$, we conclude from some elementary computations that
\be
\label{mxianzhi}
3<m\leq \f{2q+6}{3}.\ee
 Invoking interpolation inequality \eqref{mgoi}, we see that
\be\label{2.9}\ba
\|u\|_{L^{m}(\mathbb{R}^{3})}\leq& C \|u\|^{\f2m}_{\dot{H}^{\f{3(m-2)}{2q}}(\mathbb{R}^{3})}\|u\|^{1-\f2m}
_{\dot{B}_{\infty,\infty}^{-\f3q}(\mathbb{R}^{3})}
\\\leq& C \|u\|^{\f2m}_{\dot{H}^{\f{3(m-2)}{2q}}(\mathbb{R}^{3})}\|u\|^{1-\f2m}
_{\mathcal{\dot{M}}^{q,1}(\mathbb{R}^{3})},
\ea\ee
where we also used \eqref{BM}.

Using the H\"older inequality and \eqref{2.9}, we arrive at
\be\label{2.11}\ba
\|u\|^{3}_{L^{3}(B(r))}\leq& C r^{9(\f13-\f{1}{m})}\|u\|^{3}_{L^{m}(B(r))}\\\leq& C r^{9(\f13-\f{1}{m})}\|u\|^{3}_{L^{m}(\mathbb{R}^{3})}\\
\leq& C r^{9(\f13-\f{1}{m})}\|u\|^{\f6m}_{\dot{H}^{\f{3(m-2)}{2q}}(\mathbb{R}^{3})}
\|u\|^{3-\f6m}_{\mathcal{\dot{M}}^{q,1}(\mathbb{R}^{3})}.
\ea\ee
From the Gagliardo-Nirenberg inequality and \eqref{mxianzhi}, we get
$$
\|u\|_{\dot{H}^{\f{3(m-2)}{2q}}(\mathbb{R}^{3})}\leq C\|  u\|_{L^{2}(\mathbb{R}^{3})}^{\f{2q-3m+6}{2 q}}\|\nabla u\|^{\f{3m-6}{2q}}_{L^{2}(\mathbb{R}^{3})}.
$$
Substituting this into \eqref{2.11}, we deduce that
$$\ba
\|u\|^{3}_{L^{3}(B(r))}
\leq& C r^{9(\f13-\f{1}{m})}\| u\|_{L^{2}(\mathbb{R}^{3})}^{\f{3(2q-3m+6)}{m q}}\|\nabla u\|^{\f{3(3m-6)}{mq}}_{L^{2}(\mathbb{R}^{3})}
\|u\|^{3-\f6m}_{\mathcal{\dot{M}}^{q,1}(\mathbb{R}^{3})}.
\ea$$
Integrating this inequality in time and applying the H\"older inequality, we know that
$$\ba
&\int_{T-r^{2}}^{T}\|u\|^{3}_{L^{3}(B(r))}dt\\
\leq& C r^{9(\f13-\f{1}{m})} \sup_{  T-r^{2}\leq t\leq T}\| u\|_{L^{2}(\mathbb{R}^{3})}^{\f{3(2q-3m+6)}{m q}}\int^{T}_{T-r^{2}}\|\nabla u\|^{\f{3(3m-6)}{mq}}_{L^{2}(\mathbb{R}^{3})}
\|u\|^{3-\f6m}_{\mathcal{\dot{M}}^{q,1}(\mathbb{R}^{3})}dt\\
\leq& C r^{9(\f13-\f{1}{m})} \sup_{  T-r^{2}\leq t\leq T}\| u\|_{L^{2}(\mathbb{R}^{3})}^{\f{3(2q-3m+6)}{m q}}\B(\int_{T-r^{2}}^{T}\|\nabla u\|^{2}_{L^{2}(\mathbb{R}^{3})}dt\B)^{\f{3(3m-6)}{2mq}}\\&\times
\B(\int_{T-r^{2}}^{T}\|u\|^{\f{2q(3m-6)}{2qm-3(3m-6)}}
_{\mathcal{\dot{M}}^{q,1}(\mathbb{R}^{3})}dt\B)^{\f{2qm-3(3m-6)}{2qm}},
\ea$$
namely,
\be\label{4.5}\ba
&\int_{T-r^{2}}^{T}\|u\|^{3}_{L^{3}(B(r))}dt\\
\leq& C r^{\f{3(2-\alpha)}{2\alpha} } \B(\sup_{T-r^{2}\leq t\leq T}\| u\|^{2}_{L^{2}(\mathbb{R}^{3})}+\int_{T-r^{2}}^{T}\|\nabla u\|^{2}_{L^{2}(\mathbb{R}^{3})}dt\B)^{\f32-\f{1}{\alpha}}
\B(\int_{T-r^{2}}^{T}\|u\|^{p}_{\mathcal{\dot{M}}^{q,1}
(\mathbb{R}^{3})}dt\B)^{\f{2}{p\alpha}}.
\ea\ee
Choose a cut-off function $\psi\in C_{0}^{\infty}(B(\eta))$ satisfying $0\leq\psi\leq1$, $\psi\equiv1$ in $B(\f{\xi+3\eta}{4})$ and $|\nabla\psi|\leq \f{C}{|\eta-\xi|}$. Replacing $u$ by $\psi u$ in \eqref{4.5} and taking $r=\f{\xi+3\eta}{4}$, we get
$$\ba
&\int_{T-r^{2}}^{T}\|u\|^{3}_{L^{3}(B(r))}dt
\\\leq& C \eta^{\f{3(2-\alpha)}{2\alpha} } \B(1+\f{\eta^{2}}{(\eta-\xi)^{2}}\B)^{\f32-\f{1}{\alpha}}E^{\f32-\f{1}{\alpha}}(\eta)
\B(\int_{T-r^{2}}^{T}\|u\|^{p}_{\dot{\mathcal{M}}^{q,1}(B(\eta))}dt\B)^{\f{2}{p\alpha}}.
\ea$$
Here we used the fact below
$$
\|\psi u\|_{\dot{\mathcal{M}}^{q,1}(\mathbb{R}^{3})}\leq C
\|u\|_{\dot{\mathcal{M}}^{q,1}(B(\eta))},
$$
which can be derived from the definition of Morrey spaces. This completes the proof.
\end{proof}

\begin{proof}[Proof of  Proposition \ref{the1.2}]
Indeed, consider $0<R/2\leq \xi<\f{3\xi+\eta}{4}<\f{\xi+\eta}{2}<\f{\xi+3\eta}{4}<\eta\leq R$. Let $\phi(x,t)$ be a non-negative smooth function supported in $Q(\f{\xi+\eta}{2})$ such that
$\phi(x,t)\equiv1$ on $Q(\f{3\xi+\eta}{4})$,
$|\nabla \phi| \leq  C/(\eta-\xi) $ and $
|\nabla^{2}\phi|+|\partial_{t}\phi|\leq  C/(\eta-\xi)^{2} .$

The local energy inequality \eqref{loc}, the decomposition of  pressure  in Lemma \ref{lem2} and the H\"older inequality ensure that
\begin{align}
 &\int_{B(\f{\eta+\xi}{2})} |u(x,t)|^{2} \phi(x,t) dx
 +2\iint_{Q(\f{\eta+\xi}{2})}
  |\nabla u|^{2}\phi  dxds\nonumber\\\leq &  \f{C\eta^{5/3}}{(\eta-\xi)^{2}}\B(\iint_{Q(\f{\xi+3\eta}{4})} |u|^{3}
dxds\Big)^{2/3}+\f{C}{(\eta-\xi)} \iint_{Q(\f{\xi+3\eta}{4})}  |u|^{3} dxds\nonumber\\&+ \f{C\eta^{3}}{(\eta-\xi)^{4}}  \|u\|^{3}_{L^{3}(Q(\f{\xi+3\eta}{4}))}+\f{C\eta^{3/2}}{(\eta-\xi)^{4}} \|  \pi\|_{L^{1} (Q(\eta))}  \|u\|_{L^{\infty}L^2(Q(\eta)))}\nonumber\\
=&:I+II+III+IV.\label{last3}
 \end{align}
Combining
  \eqref{keyineq} and the Young inequality, we obtain
\begin{align}\nonumber
 I\leq &  \f{C\eta^{3+\alpha}}{(\eta-\xi)^{3\alpha}} \B(1+\f{\eta^{2}}{(\eta-\xi)^{2}}\B)^{ \f{3\alpha-2}{2}}
\|u\|_{L^{p}(T-\eta^{2},T;\dot{\mathcal{M}}^{q,1}(B(\eta)))}^{2}
+\f{1}{6}E(\eta),\\
\nonumber II
\leq & \f{C\eta^{3}}{(\eta-\xi)^{\f{2\alpha}{2-\alpha}}}   \B(1+\f{\eta^{2}}{(\eta-\xi)^{2}}\B)^{ \f{3\alpha-2}{2-\alpha}}
\|u\|_{L^{p}(T-\eta^{2},T;\dot{\mathcal{M}}^{q,1}(B(\eta)))}^{\f{4}{2-\alpha}}+\f{1}{6}E(\eta),\\ III\leq& \f{C\eta^{\f{3(\alpha+2)}{2-\alpha} }}{(\eta-\xi)^{\f{8\alpha}{(2-\alpha)}}}
\B(1+\f{\eta^{2}}{(\eta-\xi)^{2}}\B)^{ \f{3\alpha-2}{2-\alpha}}
\|u\|_{L^{p}(T-\eta^{2},T;\dot{\mathcal{M}}^{q,1}(B(\eta)))}^{\f{4}{2-\alpha}} +\f{1}{6}E(\eta).\label{last2}
\end{align}
In light of the definition of Morrey spaces, we see that
$$\|\pi\|_{L^{1}(B(\eta))}\leq C\eta^{3-\f{3}{m}} \|\pi\|_{\dot{\mathcal{M}}^{m,1}(B(\eta))}.$$
Using the Young inequality again, we conclude that
\be\ba IV  & \leq   \f{C\eta^{3 }}{(\eta-\xi)^{8}} \|  \pi\|^{2}_{L^{1} (Q(\eta))}+\f{1}{6}E(\eta)\\
& \leq \f{C\eta^{9-\f6m }}{(\eta-\xi)^{8}} \|  \pi\|^{2}_{L^{1}(T-\eta^{2},T;\dot{\mathcal{M}}^{m,1}(B(\eta)))} +\f{1}{6}E(\eta).
\label{last1}
\ea\ee
After plugging  \eqref{last2}-\eqref{last1} into \eqref{last3}, we apply
  the  classical Iteration Lemma  \cite[Lemma V.3.1,   p.161]{[Giaquinta]} to finish the proof.
\end{proof}

\begin{proof}[Proof of Corollary \ref{the1.3}]
Recall the $\varepsilon$-regularity criteria below shown in \cite{[HWZ]}: $u\in L^{\infty}(Q(1/2)) $ provided that
\be\label{j0il1}\ba
&\|u\|_{ L^{p }(-1,0;L^{q}(B(1))) }+\|\pi\|_{ L^{1}(Q(1)) }\leq \varepsilon_{3},\ea\ee
where
$$1\leq 2/p+3/q <2 ~~ \text{and }~~1< p<\infty.$$
Note that for any $m\geq 1$,
$$
\|\pi\|_{L^{1}(B(r))}\leq Cr^{3-\f{3}{m}} \|\pi\|_{\dot{\mathcal{M}}^{m,1}(B(r))}.
$$
This together with
the energy  bound \eqref{key ineq}   and
\eqref{j0il1} means \eqref{0il1}.
\end{proof}
\subsection{Proof of Theorem \ref{the1.1}  }

\begin{proof}
Let $\lambda=[2a(T-t)]^{-1/2}$. By means of an elementary change of variables and the definition of Morrey spaces, we have
$$\ba
R^{3(\f{1}{q}-\f{1}{l})}\B(\int_{B_{x}(R)\cap B_{x_{0}}(r)}|u(y,t)|^{l}dy\B)^{\f1l}&\leq   R^{3(\f{1}{q}-\f{1}{l})}\B(\int_{B_{x}(R) }|u|^{l}dy\B)^{\f1l}\\
&=   R^{3(\f{1}{q}-\f{1}{l})}\B(\int_{B_{x}(R) }|\lambda U(\lambda y)|^{l}dy\B)^{\f1l}\\
&\leq\lambda^{1-\f3q}\|U\|
_{\mathcal{\dot{M}}^{q,l}(\mathbb{R}^{3})}.
\ea$$
Hence, it follows from \eqref{pm} that for any $r>0$,
 \be\ba\label{3.14}
&\| u\|_{\mathcal{\dot{M}}^{q,l}(B_{x_{0}}(r))}\leq
 \lambda^{1-\f3q}\|U\|
_{\mathcal{\dot{M}}^{q,l}(\mathbb{R}^{3})},
\\
&\| \pi\|_{\mathcal{\dot{M}}^{q/2,l/2}(B_{x_{0}}(r))}\leq
 \lambda^{2-\f{6}q}\|\widetilde{\Pi}\|
_{\mathcal{\dot{M}}^{q/2,l/2}(\mathbb{R}^{3})}\leq C\lambda^{2-\f{6}q}\|U\|^{2}
_{\mathcal{\dot{M}}^{q,l}(\mathbb{R}^{3})}.
\ea \ee
Recall that $\lambda(t)= (2a(T-t))^{-1/2}$ and choose $p>1$ such that $2-p<p-\f{3p}q<2$, then there hold
 \be\ba
&\int_{T-r^{2}}^{T}\| u\|^{p}_{\mathcal{\dot{M}}^{q,l}(B_{x_{0}}(r))}dt\leq
\int_{T-r^{2}}^{T}\lambda^{p-\f{3p}q}\|U\|^{p}
_{\mathcal{\dot{M}}^{q,l}(\mathbb{R}^{3})} dt<\infty,\\
&\int_{T-r^{2}}^{T}\| \pi\|_{\mathcal{\dot{M}}^{q/2,l/2}(B_{x_{0}}(r))}dt\leq
C\int_{T-r^{2}}^{T}\lambda^{2-\f{6}q} \|U\|^{2}
_{\mathcal{\dot{M}}^{q,l}(\mathbb{R}^{3})} dt<\infty.
\ea
\ee
At this stage, the proof of Theorem \ref{the1.1} follows at once from Proposition \ref{the1.2} and  \eqref{local-energy}.
\end{proof}

\section{Proof of Theorem \ref{the1.4} }
We divide the proof of Theorem \ref{the1.4} into three steps. In
Step 1, utilizing the local suitable weak solutions and local Meyer-Gerard-Oru interpolation
inequality \eqref{keyineq}, we establish a new Caccioppoli type inequality. Step 2 is devoted to constructing the pressure $\Pi$ via  $A_p$ weighted inequalities. In the last step, an application of local energy inequalities leads to the proof of
Theorem \ref{the1.4}.

In what follows, we set
$$\|\cdot\| _{L^{p}\dot{\mathcal{M}}^{q,l}(Q(r))}:=\|\cdot\| _{L^{p}(T-r^{2},T;\dot{\mathcal{M}}^{q,l}(B(r)))}.$$
\subsection{Caccioppoli type inequality}
\begin{prop}\label{cip}
Assume that $u$ is a local suitable weak solution to the Navier-Stokes equations \eqref{NS}. Then there holds for any $q>3/2$,
  $$\ba&\|u\|^{2}_{\wred{L^{3}L^{\f{18}{5}}}(Q(\f{R}{2}))}+ \|\nabla u\|^{2}_{L^{2}(Q(\f{R}{2}))}\\
\leq &
  CR^{\f{5q-12}{3q}}\| u\|^{2}_{L^{\f{6q}{2q-3}}\dot{\mathcal{M}}^{q,1}(Q(R))}
  +CR^{\f{4q-15}{2q-3}}\| u\|^{\f{6q}{2q-3}}_{L^{\f{6q}{2q-3}}\dot{\mathcal{M}}^{q,1}(Q(R))}
  +CR^{\f{2q-6}{q}}\| u\|^{3}_{L^{\f{6q}{2q-3}}\dot{\mathcal{M}}^{q,1}(Q(R))}.
\ea$$
 \end{prop}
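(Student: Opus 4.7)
The plan is to apply the local energy inequality \eqref{wloc1} to $(u,\pi)$ with a standard parabolic cutoff $\phi\in C_0^\infty(Q(R))$ satisfying $\phi\equiv 1$ on $Q(R/2)$, $|\nabla\phi|\leq C/R$ and $|\partial_t\phi|+|\nabla^2\phi|\leq C/R^2$. This controls $\|v\|^2_{L^\infty L^2(Q(R/2))}+\|\nabla v\|^2_{L^2(Q(R/2))}$, where $v=u+\nabla\pi_h$. Since $\mathrm{div}\,u=0$ on $\mathbb{R}^3$, we have $\Delta\pi_h=\mathrm{div}\,u=0$ in $B(R)$, so $\pi_h$ is harmonic there. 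Combined with \eqref{ph} and the interior estimates \eqref{h1}--\eqref{h2}, this yields $\|\nabla\pi_h\|_{L^p(B(r))}+(R-r)\|\nabla^2\pi_h\|_{L^p(B(r))}\leq C\|u\|_{L^p(B(R))}$ for every $r<R$ and $1<p<\infty$, which allows me to transfer freely between norms of $v$ and of $u$ on nested balls.

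To recover the LHS of the target inequality, I interpolate $L^\infty_tL^2_x$ with $L^2_tL^6_x$ via Hölder in time and Sobolev in space:
\[
\|v\|^2_{L^3L^{18/5}(Q(R/2))}\leq C\|v\|^{2/3}_{L^\infty L^2(Q(R/2))}\bigl(\|\nabla v\|_{L^2(Q(R))}+R^{-1}\|v\|_{L^2(Q(R))}\bigr)^{4/3}.
\]
After converting $v\leftrightarrow u$ via the identity above, the full target LHS is controlled by the LHS of \eqref{wloc1} up to a tolerable $CR^{-2}\|u\|^2_{L^2(Q(R))}$ remainder. The task then reduces to estimating the five RHS terms of \eqref{wloc1}.

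These five terms are handled as follows: $\int\!\!\int |v|^2(\Delta\phi+\partial_s\phi)$ yields $CR^{-2}\|u\|^2_{L^2(Q(R))}$; the convective term $\int\!\!\int |v|^2u\cdot\nabla\phi$ is split by Hölder into $\tfrac{C}{R}\|v\|^2_{L^3L^{18/5}(Q(R))}\|u\|_{L^3L^{9/4}(Q(R))}$; the pressure pieces $\int\!\!\int \phi\pi_iv\cdot\nabla\phi$ ($i=1,2$) are controlled via \eqref{p1}--\eqref{p2}, producing $\|\nabla u\|_{L^2(Q(R))}$ and $\|u\|^2_{L^p(Q(R))}$ factors; and the delicate term $\int\!\!\int \phi(u\otimes v:\nabla^2\pi_h)$ is tamed by the harmonicity of $\pi_h$, which turns $\|\nabla^2\pi_h\|_{L^\infty(B(R/2))}$ into $CR^{-2-3/p}\|u\|_{L^p(B(R))}$ for any choice of $p$. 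Each remaining spatial $L^m_x$ norm of $u$ is then passed through the Meyer--Gerard--Oru interpolation \eqref{mgoi} and the embedding \eqref{BM} exactly as in Lemma \ref{ineq}, giving $\|u\|_{L^m_x}\leq C\|u\|^{2/m}_{\dot{H}^s_x}\|u\|^{1-2/m}_{\dot{\mathcal{M}}^{q,1}_x}$ with $s=3(m-2)/(2q)$. A Gagliardo--Nirenberg step then trades $\dot{H}^s_x$ for an interpolation of $\|u\|_{L^2_x}$ and $\|\nabla u\|_{L^2_x}$, and a Hölder in time assembles the prescribed $L^{6q/(2q-3)}_t$ integrability on the Morrey-norm factor.

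Young's inequality then absorbs any resulting power of $E(\eta):=\|v\|^2_{L^\infty L^2(Q(\eta))}+\|\nabla v\|^2_{L^2(Q(\eta))}$ of exponent strictly less than $1$ into a small fraction of $E$ on a slightly larger ball, after which the classical iteration on nested radii \cite[Lemma V.3.1]{[Giaquinta]} closes the Caccioppoli bound. The main obstacle will be the $\nabla^2\pi_h$ term, which is both nonlocal and carries two derivatives; the saving is the harmonicity of $\pi_h$ in $B(R)$, which makes an $L^\infty$ bound on $\nabla^2\pi_h$ available at the cost of a prescribed power of $R^{-1}$, but the auxiliary exponent $p$ must be chosen precisely so that, after Meyer--Gerard--Oru and the time-Hölder step, the resulting $R$-powers and homogeneities reproduce the three prescribed exponents $\tfrac{5q-12}{3q}$, $\tfrac{4q-15}{2q-3}$ and $\tfrac{2q-6}{q}$ of the statement. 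The principal care is therefore in this bookkeeping rather than in any individual estimate.
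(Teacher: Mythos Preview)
Your overall architecture---apply \eqref{wloc1} with Wolf's decomposition, exploit that $\pi_h$ is harmonic to pass between $u$ and $v$, bound the right-hand side by $\|u\|_{L^3}$, then invoke Meyer--Gerard--Oru plus a nested-radius iteration---matches the paper's. The gap is in the step you describe as ``exactly as in Lemma \ref{ineq}.''

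Lemma \ref{ineq} localizes \eqref{mgoi} by applying it to $\psi u$, which produces $\|\psi u\|_{\dot H^1}\leq \|\nabla u\|_{L^2(B(\rho))}+C(\rho-r)^{-1}\|u\|_{L^2(B(\rho))}$; the output therefore carries the full energy $E(\rho)=\sup_t\|u\|_{L^2}^2+\int\|\nabla u\|_{L^2}^2$. But the quantity you are iterating here is $\|u\|_{L^3L^{18/5}(Q(r))}^2+\|\nabla u\|_{L^2(Q(r))}^2$, which does \emph{not} dominate $\sup_t\|u\|_{L^2(B(r))}^2$, so after Young's inequality the leftover $\varepsilon\sup_t\|u\|_{L^2(B(\rho))}^2$ cannot be absorbed. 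Attempting instead to iterate $E(r)$ fails for a different reason: converting $\|v\|_{L^\infty L^2(B(r))}$ back to $\|u\|_{L^\infty L^2(B(r))}$ via $u=v-\nabla\pi_h$ and \eqref{ph} costs a \emph{non-small} multiple of $\|u\|_{L^\infty L^2}$ on the larger ball, and $\dot{\mathcal M}^{q,1}$ gives no $L^2$ control of $u$ to dispose of that remainder otherwise.

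The paper's fix is to localize \eqref{mgoi} differently in this proof: apply it to $\psi\bigl(u-\bar u_{B(\rho)}\bigr)$ rather than to $\psi u$. Poincar\'e then gives $\|\nabla[\psi(u-\bar u_{B(\rho)})]\|_{L^2}\leq \tfrac{C\rho}{\rho-r}\|\nabla u\|_{L^2(B(\rho))}$ with no zero-order term, and the subtracted mean is handled directly by the Morrey norm, $|\bar u_{B(\rho)}|\leq C\rho^{-3/q}\|u\|_{\dot{\mathcal M}^{q,1}(B(\rho))}$. This extra mean contribution, after the time H\"older, is exactly what produces the third term $CR^{(2q-6)/q}\|u\|^3_{L^{6q/(2q-3)}\dot{\mathcal M}^{q,1}(Q(R))}$ in the statement---a term your outline does not account for. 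With this modification the only absorbable quantity on the right is $\|\nabla u\|_{L^2(Q(\rho))}^2$, and the Giaquinta iteration closes cleanly.
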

 As a straightforward consequence of the above proposition, we establish a  regularity criterion at one scale without pressure in Morrey spaces for local suitable weak solutions to \eqref{NS}. The H\"older inequality together with any result of \cite{[CW],[Wolf1],[Wolf2],[JWZ],[WWZ]} and this proposition  yields the desired.
 \begin{coro}\label{the1.5}
		Let  the pair $(u, \pi)$ be a local suitable weak solution to the 3D Navier-Stokes system \eqref{NS} in $Q(1)$.
		There exists an absolute positive constant $\varepsilon_{2}$
		such that if the pair $(u,\pi)$ satisfy
\be\label{0il}\ba
\int_{-1}^{0}\|u\|^{\f{6q}{2q-3}}_{\mathcal{\dot{M}}^{q,1}(B(1))}dt \leq \varepsilon_{2} ~~\text{with} ~~ q>3/2,
\ea\ee
	 then  $u\in L^{\infty}(Q(1/2)).$
	\end{coro}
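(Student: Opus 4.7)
The plan is to derive Corollary \ref{the1.5} by feeding the Caccioppoli-type estimate of Proposition \ref{cip} into a pre-existing pressure-free one-scale $\varepsilon$-regularity criterion for local suitable weak solutions drawn from \cite{[CW],[Wolf1],[Wolf2],[JWZ],[WWZ]}. In one sentence: the Morrey smallness \eqref{0il} is converted, via Proposition \ref{cip} and a routine Hölder inequality, into smallness of a scale-invariant pressure-free quantity (such as $\iint|u|^{3}$ or $\iint|\nabla u|^{2}$ appropriately normalised) to which the known criterion applies, yielding the desired $L^{\infty}$ regularity.

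Concretely, I would first apply a translated version of Proposition \ref{cip} around each parabolic base point $(x_{0},t_{0})\in Q(1/2)$, using a subcylinder $Q_{\rho}(x_{0},t_{0})\subset Q(1)$ with $\rho\in(0,1/4]$. Setting
\[
\mathcal{N}:=\left(\int_{-1}^{0}\|u\|^{6q/(2q-3)}_{\dot{\mathcal{M}}^{q,1}(B(1))}dt\right)^{(2q-3)/(6q)}
\]
and using monotonicity of the Morrey norm under restriction to subdomains to control the local Morrey quantity by $\mathcal{N}$, Proposition \ref{cip} yields a bound of the schematic form
\[
\|u\|^{2}_{L^{3}L^{18/5}(Q_{\rho/2}(x_{0},t_{0}))}+\|\nabla u\|^{2}_{L^{2}(Q_{\rho/2}(x_{0},t_{0}))}\le C\bigl(\mathcal{N}^{2}+\mathcal{N}^{6q/(2q-3)}+\mathcal{N}^{3}\bigr).
\]
The hypothesis \eqref{0il} gives $\mathcal{N}\le\varepsilon_{2}^{(2q-3)/(6q)}$, so choosing $\varepsilon_{2}$ small makes the left-hand side arbitrarily small. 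A Hölder inequality in space, $\|u\|_{L^{3}(B(\rho))}\le C\rho^{1/6}\|u\|_{L^{18/5}(B(\rho))}$, integrated in time, then converts this into smallness of the scale-invariant pressure-free quantity $\rho^{-2}\iint_{Q_{\rho/2}(x_{0},t_{0})}|u|^{3}\,dxdt$ (and also of $\rho^{-1}\iint|\nabla u|^{2}$). A pressure-free $\varepsilon$-regularity criterion for local suitable weak solutions, as developed by Wolf in \cite{[Wolf1],[Wolf2]} and employed in \cite{[CW],[JWZ],[WWZ]}, then delivers $u\in L^{\infty}$ on a still smaller cylinder around $(x_{0},t_{0})$; varying $(x_{0},t_{0})\in Q(1/2)$ and a standard finite covering argument conclude $u\in L^{\infty}(Q(1/2))$.

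The main obstacle is essentially bookkeeping rather than mathematics: one must match the precise form of smallness produced by Proposition \ref{cip} combined with Hölder to the precise hypothesis of whichever pressure-free $\varepsilon$-regularity statement from \cite{[CW],[Wolf1],[Wolf2],[JWZ],[WWZ]} one elects to quote. The cited results are phrased variously in terms of $\iint|u|^{3}$, $\iint|\nabla u|^{2}$, or Wolf-type normalised local energies, but a brief additional Hölder interpolation handles any discrepancy, and each reference ultimately produces the same $L^{\infty}$ conclusion. The restriction $q>3/2$ in \eqref{0il} is precisely what makes the exponent $6q/(2q-3)$ finite and positive, so that Proposition \ref{cip} is applicable; the endpoint $q=3/2$ formally corresponds to the classical $L^{\infty}_{t}L^{3,\infty}_{x}$ scaling and lies outside the scope of the present argument.
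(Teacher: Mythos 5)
Your proposal is correct and follows essentially the same route as the paper, whose entire argument for this corollary is the single remark that ``the H\"older inequality together with any result of \cite{[CW],[Wolf1],[Wolf2],[JWZ],[WWZ]} and this proposition yields the desired'': namely, Proposition \ref{cip} converts the Morrey smallness \eqref{0il} into smallness of the pressure-free quantities $\iint|u|^{3}$ and $\iint|\nabla u|^{2}$, to which a known pressure-free one-scale $\varepsilon$-regularity criterion is applied. Your added translation-and-covering step to reach all of $Q(1/2)$ is a reasonable piece of bookkeeping that the paper leaves implicit.
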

\begin{proof}[Proof of Proposition \ref{cip}]
 Consider  $0<R/2\leq r<\f{3r+\rho}{4}<\f{r+\rho}{2}<\rho\leq R$. Let $\phi(x,t)$ be a non-negative smooth function supported in $Q(\f{r+\rho}{2})$ such that
$\phi(x,t)\equiv1$ on $Q(\f{3r+\rho}{4})$,
$|\nabla \phi| \leq  C/(\rho-r) $ and $
|\nabla^{2}\phi|+|\partial_{t}\phi|\leq  C/(\rho-r)^{2} .$

Let $\nabla\pi_{h}=\mathcal{W}_{3,B(\f{r+3\rho}{4})}(u)$, then,
from \eqref{ph}-\eqref{p2}, we have
\begin{align}
&\|\nabla \pi_{h}\|_{L^{3}(Q(\f{r+3\rho}{4}))}\leq C\|u\|_{L^{3}(Q(\f{r+3\rho}{4}))},\label{wp1}\\
 &\|  \pi_{1}\|_{L^{2}(Q(\f{r+3\rho}{4}))}\leq C\|\nabla u\|_{L^{2}(Q(\f{r+3\rho}{4}))},\label{wp2}\\
 &\|  \pi_{2}\|_{L^{\f{3}{2}}(Q(\f{r+3\rho}{4}))}\leq C\|  |u|^{2}\|_{L^{\f{3}{2}}(Q(\f{r+3\rho}{4}))}.\label{wp3}
 \end{align}
Since $v=u+\nabla\pi_{h}$, the H\"older inequality and \eqref{wp1} allow us to write
\begin{align}
 \iint_{Q(\rho)} \wred{| v|^2}\Big|  \Delta \phi^{4}+  \partial_{t}\phi^{4}\Big|    \leq& \f{C}{(\rho-r)^{2}}\iint_{Q(\f{r+\rho}{2})} \B(|u|^{2}+|\nabla\pi_{h}|^{2}\B)
\nonumber\\\leq& \f{C\rho^{5/3}}{(\rho-r)^{2}}\B(\iint_{Q(\f{r+\rho}{2})} |u|^{3}+|\nabla\pi_{h}|^{3}\B)^{\f{2}{3}}
\nonumber\\\leq& \f{C\rho^{5/3}}{(\rho-r)^{2}}\|u\|_{L^{3}(Q(\f{r+3\rho}{4}))}^{2}.\label{53.2} \end{align}
 The H\"older  inequality, $v=u+\nabla\pi_{h}$ and  \eqref{wp1}  ensure
 \begin{align}
 \left|\iint_{Q(\rho)}|v|^{2}\phi^{3}u\cdot\nabla \phi\right|
\leq \f{C}{(\rho-r)}
 \| u \|^{3}_{L^{3}(Q(\f{r+3\rho}{4}))}.
 \end{align}
 It follows from the interior estimate of harmonic functions \eqref{h1} and \eqref{wp1} that
$$\ba\|\nabla^{2}\pi_{h} \|_{L^{20/7}(Q(\f{r+\rho}{2}))}&\leq
\f{\wred{C} (r+\rho)  }{(\rho-r)^{ 2}}
\|\nabla\pi_{h} \|_{L^{3}(Q( \f{r+3\rho}{4} ))}\\
&\leq
\f{ C\rho  }{(\rho-r)^{ 2}}
\|u \|_{L^{3}(Q( \f{r+3\rho}{4}))},
\ea$$
which in turn implies that
\begin{align}
&\left|\iint_{Q(\rho)} \phi^{4}( u\otimes v :\nabla^{2}\pi_{h} )\right|  \nonumber\\
\leq&
\| v\phi^{2}\|_{L^{3}(Q(\f{r+\rho}{2}))}
\| u \|_{L^{3}(Q(\f{r+\rho}{2}))}\|\nabla^{2}\pi_{h} \|_{L^{3}(Q(\f{r+\rho}{2}))}
\nonumber\\
\leq &\f{ C\rho  }{(\rho-r)^{ 2}}
\|u \|^{3}_{L^{3}(Q( \f{r+3\rho}{4} ))}.
\end{align}
The H\"older inequality, \eqref{wp2} and Young's inequality yield that
\begin{align}
\left|\iint_{Q(\rho)} \phi^{3} \pi_{1}v\cdot\nabla \phi\right|
&\leq \f{C}{(\rho-r)}\| v\|_{L^{2}(Q(\f{r+\rho}{2}))}
\| \pi_{1} \|_{L^{2}(Q(\f{r+\rho}{2}))} \nonumber\\
&\leq \f{C}{(\rho-r)^{2}}\| v\|^{2}_{L^{2}(Q(\f{r+\rho}{2}))}
+\f{1}{16}\| \pi_{1} \|^{2}_{L^{2}(Q(\f{r+3\rho}{4}))} \nonumber\\
&\leq \f{C\rho^{5/3}}{(\rho-r)^{2}}\|u\|_{L^{3}(Q(\f{r+3\rho}{4}))}^{2}+\f{1}{16}\| \nabla u\|^{2}_{L^{2}(Q(\f{r+3\rho}{4}))}.
\end{align}
We deduce by the H\"older inequality  and \eqref{wp3}   that
\begin{align}
\left|\iint_{Q(\rho)} \phi^{3} \pi_{2}v\cdot\nabla \phi\right|
 \leq \f{C}{(\rho-r)}\| v\phi^{2}\|_{L^{3}(Q(\f{r+\rho}{2}))}
\| \pi_{2} \|_{L^{\f{3}{2}}(Q(\f{r+\rho}{2}))} \leq\f{ C }{(\rho-r)}
\|u \|^{3}_{L^{3}(Q(\f{r+3\rho}{4} ))}.\label{locp5}
\end{align}
Inserting \eqref{53.2}-\eqref{locp5} into the local energy inequality \eqref{wloc1}, we arrive at
\begin{align}
\sup_{T-\rho^{2}\leq t\leq T}\int_{B(\rho)}|v\phi^{2}|^2   + \iint_{Q(\rho)}\big|\nabla( v\phi^{2})\big|^2  \leq& \f{C\rho^{5/3}}{(\rho-r)^{2}}\|u\|_{L^{3}(Q(\f{r+3\rho}{4}))}^{2} +\f{ C\rho  }{(\rho-r)^{ 2}}
\|u \|^{3}_{L^{3}(Q( \f{r+3\rho}{4} ))}\nonumber\\&+\f{ C }{(\rho-r)}
\|u \|^{3}_{L^{3}(Q( \f{r+3\rho}{4} ))}+\f{1}{16}\| \nabla u\|^{2}_{L^{2}(Q(\f{r+3\rho}{4}))}.\label{keyl}
  \end{align}
The interior estimate of harmonic functions
  \eqref{h1} and \eqref{wp1} provide the bound
\be\ba\label{6.10}
\|\nabla\pi_{h}\|^{2}_{L^{3}L^{\f{18}{5}}(Q(r))}&\leq \f{Cr^{\f{5}{3}}}{(\rho-r)^{2}}\|\nabla\pi_{h}\|^{2}_{L^{3}(Q(\f{r+3\rho}{4}))}\leq \f{Cr^{\f{5}{3}}}{(\rho-r)^{2}}
\|u\|^{2}_{L^{3}(Q(\f{r+3\rho}{4}))}.
\ea\ee
Combining the triangle inequality, \eqref{keyl} and \eqref{6.10}, we discover that
\be\ba\label{j4.12}
 \|u\|^{2}_{L^{3}L^{\f{18}{5}}(Q(r))} \leq& \|v\|^{2}_{\wred{L^{3}L^{\f{18}{5}}}(Q(r))}+\|\nabla\pi_{h}\|^{2}_{\wred{L^{3}L^{\f{18}{5}}}(Q(r))}\\
\leq& C\B\{\|v\|_{\wred{L^{\infty}L^2}(Q(r))}^{2}+\|\nabla v\|_{L^{2}(Q(r))}^{2}\B\}+\f{Cr^{\f{5}{3}}}{(\rho-r)^{2}}\|u\|^{2}_{L^{3}(Q(\f{r+3\rho}{4}))}\\
\leq & \f{C\rho^{5/3}}{(\rho-r)^{2}}\|u\|_{L^{3}(Q(\f{r+3\rho}{4}))}^{2}+\f{ C\rho  }{(\rho-r)^{ 2}}
\|u \|^{3}_{L^{3}(Q( \f{r+3\rho}{4} ))} \\&+\f{ C }{(\rho-r)}
\|u \|^{3}_{L^{3}(Q( \f{r+3\rho}{4} ))}+\f{1}{16}\| \nabla u\|^{2}_{L^{2}(Q(\f{r+3\rho}{4}))}.
\ea\ee
According to \eqref{h1} and \eqref{wp1} again, we ascertain
$$
 \|\nabla^{2} \pi_{h}\|^{2}_{L^{2}(Q(r))}\leq \f{Cr^{3}}{(\rho-r)^{5}} \|\nabla\pi_{h}\|^{2}_{L^{2}(Q(\f{r+\rho}{2}))}\leq \f{Cr^{3}\rho^{5/3}}{(\rho-r)^{5}} \| u \|^{2}_{L^{3}(Q(\f{r+3\rho}{4}))}.
$$
Owing to  the triangle \wred{inequality and \eqref{keyl}}, we have
\begin{align}
  \|\nabla u\|^{2}_{L^{2}(Q(r))}\leq & \|\nabla v\|^{2}_{L^{2}(Q(r))}+
 \|\nabla^{2} \pi_{h}\|^{2}_{L^{2}(Q(r))}\nonumber\\ \leq &
  \B\{\f{C\rho^{5/3}}{(\rho-r)^{2}}+\f{Cr^{3}\rho^{5/3}}{(\rho-r)^{5}} \B\} \| u \|^{2}_{L^{3}(Q(\f{r+3\rho}{4}))}\nonumber\\&
  +\B\{\f{ C\rho  }{(\rho-r)^{ 2}} +\f{ C }{(\rho-r)}\B\}
\|u \|^{3}_{L^{3}(Q( \f{r+3\rho}{4} ))}+\f{1}{16}\| \nabla u\|^{2}_{L^{2}(Q(\f{r+3\rho}{4}))}. \label{65.11}
\end{align}
Adding \eqref{j4.12} to \eqref{65.11}, we derive that
\begin{align}
   &\|u\|^{2}_{L^{3}L^{\f{18}{5}}(Q(r))}+\|\nabla u\|^{2}_{L^{2}(Q(r))}\\\leq&
  \B\{\f{C\rho^{5/3}}{(\rho-r)^{2}}+\f{Cr^{3}\rho^{5/3}}{(\rho-r)^{5}} \B\} \| u \|^{2}_{L^{3}(Q(\f{r+3\rho}{4}))}\nonumber\\&
  +\B\{\f{ C\rho  }{(\rho-r)^{ 2}} +\f{ C }{(\rho-r)}\B\}
\|u \|^{3}_{L^{3}(Q( \f{r+3\rho}{4} ))}+\f{1}{8}\| \nabla u\|^{2}_{L^{2}(Q(\f{r+3\rho}{4}))}. \label{j65.11}
\end{align}
Thus, the key ingredient is to control $\|u \|^{3}_{L^{3}}$  on the right hand side of the above inequality.
To this end, invoking the H\"older inequality and \eqref{mgoi}, we see that
\be\label{6.12}\ba
\|u\|^{3}_{L^{3}(B(\f{r+3\rho}{4}))}\leq& C \rho^{9(\f13-\f{3}{2q+6})}\|u\|^{3}_{L^{\f{2q+6}{3}}(B(\f{r+3\rho}{4}))}\\
\leq& C \rho^{\f{3(2q-3)}{2q+6}}\|u\|^{\f{18}{2q+6}}_{\dot{H}^{1}(\mathbb{R}^{3})}
\|u\|^{3-\f{18}{2q+6}}_{\dot{\mathcal{M}}^{q,1}(\mathbb{R}^{3})}.
\ea\ee
Choose a cut-off function $\psi\in C_{0}^{\infty}(B(\rho))$ satisfying $0\leq\psi\leq1$, $\psi\equiv1$ in $B(\f{r+3\rho}{4})$ and $|\nabla\psi|\leq \f{C}{|\rho-r|}$.
In view of triangle inequality and the classical Poincar\'e inequality, we deduce that
\be\ba
\|\nabla [(u-\bar{u}_{B(\rho)})\phi]\|_{L^{2}({B(\rho)})}
\leq& \|\phi\nabla u \|_{L^{2}({B(\rho)})}+\|\nabla\phi (u-\bar{u}_{B(\rho)})\|_{L^{2}({B(\rho)})}\\
\leq&   (C+\f{C\rho}{\rho-r}) \|  \nabla u\|_{L^{2}({B(\rho)})} \\
\leq&    \f{C\rho}{\rho-r}  \|  \nabla u\|_{L^{2}({B(\rho)})}.
\ea\ee
We calculate
\be\label{6.142}\ba
\|(u-\bar{u}_{B(\rho)})\phi\|_{\dot{\mathcal{M}}^{q,1}
(\mathbb{R}^{3})}&\leq
\|u\phi\|_{\dot{\mathcal{M}}^{q,1}(B(\rho))}
+\|\phi\bar{u}_{B(\rho)}\|_{\dot{\mathcal{M}}^{q,1}(B(\rho))}\\
&\leq C
\|u\|_{\dot{\mathcal{M}}^{q,1}(B(\rho))}.
\ea\ee
Thanks to the triangle inequality again and \eqref{6.12}-\eqref{6.142}, we infer that$$\ba
\|u\|^{3}_{L^{3}(B(\f{r+3\rho}{4}))}
\leq& C \rho^{\f{3(2q-3)}{2q+6}}
\|u-\bar{u}_{B(\rho)}\|^{3}_{L^{\f{2q+6}{3}}(B(\f{r+3\rho}{4}))}
+\|\bar{u}_{B(\rho)}\|^{3}_{L^{3}(B(\f{r+3\rho}{4}))}\\\leq& C \rho^{\f{3(2q-3)}{2q+6}}
\|\phi(u-\bar{u}_{B(\rho)})\|^{3}_{L^{\f{2q+6}{3}}(B(\rho))}
+\|\bar{u}_{B(\rho)}\|^{3}_{L^{3}(B(\f{r+3\rho}{4}))}\\
\leq& C \rho^{\f{3(2q-3)}{2q+6}}\|(u-\bar{u}_{B(\rho)})\phi\|^{\f{18}{2q+6}}_{\dot{H}^{1}(\mathbb{R}^{3})}
\|(u-\bar{u}_{B(\rho)})\phi\|^{3-\f{18}{2q+6}}_{\dot{\mathcal{M}}^{q,1}(\mathbb{R}^{3})}+C
\rho^{3-\f{9}{q}}\|u\|^{3}_{\dot{\mathcal{M}}^{q,1}(B(\rho))}\\
\leq&    \f{C\rho^{ \f{8q-3 }{2q+6}}}{(\rho-r)}   \|\nabla u\|^{\f{18}{2q+6}}_{L^{2}(B(\rho))}
\|u\|^{3-\f{18}{2q+6}}_{\dot{\mathcal{M}}^{q,1}(B(\rho))}+C
\rho^{3-\f{9}{q}}\|u\|_{\dot{\mathcal{M}}^{q,1}(B(\rho))}^{3}.
\ea$$

From the H\"older inequality, we know that
$$\ba
&\int^{T}_{T-\f{(r+3\rho)^{2}}{16}}\|u\|^{3}_{L^{3}(B(\f{r+3\rho}{4}))}dt\\
\leq& \f{C\rho^{\f{ 8q-3}{2q+6}}}{(\rho-r)} \int^{T}_{T-\f{(r+3\rho)^{2}}{16}}\|\nabla u\|^{\f{18}{2q+6}}_{L^{2}(B(\rho))}
 \|u\|^{\f{6q}{2q+6}}_{{\dot{\mathcal{M}}^{q,1}(B(\rho))}}dt+
C\rho^{3-\f{6}{q}} \int^{T}_{T-\f{(r+3\rho)^{2}}{16}}\|u\|^{3}_{\dot{\mathcal{M}}^{q,1}(B(\rho))}dt\\
\leq& \f{C\rho^{\f{ 8q-3}{2q+6}}}{(\rho-r)}\B(\int^{T}_{T-\f{(r+3\rho)^{2}}{16}}\|\nabla u\|^{2}_{L^{2}(B(\rho))}dt\B)^{\f{9}{2q+6}}
\B(\int^{T}_{T-\f{(r+3\rho)^{2}}{16}}
 \|u\|^{\f{6q}{2q-3}}_{{\dot{\mathcal{M}}^{q,1}(B(\rho))}}dt\B)^{\f{2q-3}{2q+6}}\\&+C
\rho^{3-\f{6}{q}}\B(\int^{T}_{T-\f{(r+3\rho)^{2}}{16}}
\|u\|^{\f{6q}{2q-3}}_{\dot{\mathcal{M}}^{q,1}(B(\rho))}dt\B)^{\f{2q-3}{2q}},
\ea$$
where we have used the fact that $\f{18}{2q+6}<2$ and $3\leq \f{6q}{2q-3}$.

Plugging this inequality into \eqref{j65.11} and using the Young inequality, we obtain
   $$\ba
&\|u\|^{2}_{L^{3}L^{\f{18}{5}}(Q(r))}+ \|\nabla u\|^{2}_{L^{2}(Q(r))}
  \\\leq&
  \B\{\f{C\rho^{5/3}}{(\rho-r)^{2}}+\f{Cr^{3}\rho^{5/3}}{(\rho-r)^{5}} \B\}^{\f{q+3}{q}}\f{\rho^{\f{8q-3}{3q}}}{(\rho-r)^{\f{2q+6}{3q}}} \| u \|^{2}_{L^{\f{6q}{2q-3}}\dot{\mathcal{M}}^{q,1}(Q(\rho))}\\
  &+\B\{\f{C\rho^{5/3}}{(\rho-r)^{2}}+\f{Cr^{3}\rho^{5/3}}{(\rho-r)^{5}} \B\}
  \rho^{\f{2q-4 }{q}}\|u\|^{2}_{L^{\f{6q}{2q-3}}\dot{\mathcal{M}}^{q,1}(Q(\rho))}\\&
  +\B\{\f{ C\rho  }{(\rho-r)^{ 2}} +\f{ C }{(\rho-r)}\B\}^{\f{2q+6}{2q-3}}\f{\rho^{\f{8q-3}{2q-3}}}{(\rho-r)^{\f{2q+6}{2q-3}}}
\|u \|^{\f{6q}{2q-3}}_{L^{\f{6q}{2q-3}}\dot{\mathcal{M}}^{q,1}(Q( \rho))}\nonumber\\
&+\B\{\f{ C\rho  }{(\rho-r)^{ 2}} +\f{ C }{(\rho-r)}\B\}\rho^{\f{3q-6}{q}}\|u \|^{3}_{L^{\f{6q}{2q-3}}\dot{\mathcal{M}}^{q,1}(Q( \rho))}+\f{3}{16}\| \nabla u\|^{2}_{L^{2}(Q(\rho))}.
 \ea $$
Now, we are in a position to apply  the classical \wred{Iteration Lemma} \cite[Lemma V.3.1,   p.161]{[Giaquinta]} to find that
$$\ba
&\|u\|^{2}_{\wred{L^{3}L^{\f{18}{5}}}(Q(\f{R}{2}))}+ \|\nabla u\|^{2}_{L^{2}(Q(\f{R}{2}))}\\
\leq &
  CR^{\f{5q-12}{3q}}\| u\|^{2}_{L^{\f{6q}{2q-3}}\dot{\mathcal{M}}^{q,1}(Q(R))}
  +CR^{\f{4q-15}{2q-3}}\| u\|^{\f{6q}{2q-3}}_{L^{\f{6q}{2q-3}}\dot{\mathcal{M}}^{q,1}(Q(R))}
  +CR^{\f{2q-6}{q}}\| u\|^{3}_{L^{\f{6q}{2q-3}}\dot{\mathcal{M}}^{q,1}(Q(R))}.
\ea$$
  This achieves the proof of \wred{this proposition}.
\end{proof}

\subsection{Proof of Theorem \ref{the1.4}}
\begin{proof}
It follows from \eqref{3.14} and $\f32<q<6$ that for any $r>0$,
$$
\int_{T-R^{2}}^{T}\| u\|^{\f{6q}{2q-3}}_{\mathcal{\dot{M}}^{q,1}(B(r))}dt\leq
\int_{T-R^{2}}^{T}\lambda^{\f{6q}{2q-3}(1-\f{3}q)}\|U\|^{\f{6q}{2q-3}}
_{\mathcal{\dot{M}}^{q,1}(\mathbb{R}^{3})} dt<\infty.
$$
Thus, this together with Proposition 4.1 implies
\be\label{5.15}
\|u\|^{2}_{L^{3}(Q(\f{R}{2}))}+ \|\nabla u\|^{2}_{L^{2}(Q(\f{R}{2}))}<\infty.
\ee
Let $\lambda_{0}=(2aT)^{-1/2}$. Changing the order of integration, we apply \eqref{5.15} to deduce that
\be\ba \label{6.1}
&\iint_{Q(1)}|u|^{3}dxdt=\int_{\mathbb{R}^{3}}|U|^{3}
 \lambda_{0}^{2}\min\big\{|y|^{-2},
 \lambda_{0}^{-2}\big\}dy,\\
 &\iint_{Q(1)}|\nabla u|^{2}dxdt=\int_{\mathbb{R}^{3}}|\nabla U|^{2}
 2\lambda_{0}^{2}\min\big\{|y|^{-1},\lambda_{0}^{-1}\big\}dy,
\ea\ee
which in turn implies that
$$
\|\nabla U\|_{L^{2}(B_{y_{0}}(1))}=o(|y_{0}|^{1/2}) ~~~\text{and}~~~
\|  U\|_{L^{3}(B_{y_{0}}(1))}=o(|y_{0}|^{2/3}), ~~\text{as} ~~|y_{0}|\rightarrow\infty.
$$
Next, to employ the local energy inequality \eqref{loc} and \eqref{local-energy}, we take into account
  recovering  pressure $\Pi$ via \eqref{6.1}.
Observe that \eqref{6.1} yields
\be\label{6.2}
 \int_{\mathbb{R}^{3}}|U|^{3}|y|^{-2}dy<\infty,
\ee
we can define $$\widetilde{\Pi}=\mathcal{R}_{i}\mathcal{R}_{j}(U_{i}U_{j}),$$
where $U = (U_1,U_2,U_3)$ is determined by \eqref{6.2}.
Due to the classical Calder\'on-Zygmund Theorem with $A_{3/2}$ weights, there holds
\be\label{6.14}
\|\widetilde{\Pi}|y|^{-\f{4}{3}}\|_{L^{3/2}(\mathbb{R}^{3})}\leq C\|U_{i}U_{j}|y|^{-\f{4}{3}}\|_{L^{3/2}(\mathbb{R}^{3})}.
\ee
This means the local integrability of $\widetilde{\Pi}$.
Thus, Weyl's lemma guarantees that $\widetilde{\Pi}$ is smooth.

To proceed further, the fact that $U\in W^{1,2}_{\rm loc}(\mathbb{R}^{3})$ and $\int_{\mathbb{R}^{3}}|U|^{3}(1+|y|)^{-2}dy<\infty$ allows us to  revise the proof of Lemma \ref{cp} to show  that
$\widetilde{\Pi}$ satisfies  $-\Delta \Pi=\partial_{i}\partial_{j}(U_{i}U_{j})$  in the distributional sense.
Here, we just prove that $\nabla \Pi=\nabla \widetilde{\Pi}$.

We use the same notations given in the proof of Lemma \ref{cp}.
By integration by parts, we have
$$\ba
\varepsilon^{3}\left|\int_{\mathbb{R}^{3}}\Delta U\varphi(\varepsilon y)dy\right|=&\varepsilon^{5}\left|\int_{\mathbb{R}^{3}} U|y|^{-\f{2}{3}}|y|^{\f{2}{3}}\Delta\varphi(\varepsilon y)dy\right| \\ \leq& \varepsilon^{5}
\B(\int_{|y|<\f{1}{\varepsilon}}|U|^{3}|y|^{-2}dy\B)^{\f13}
\B(\int_{|y|<\f{1}{\varepsilon}}|y||\Delta\varphi(\varepsilon y)|^{\f{3}{2}}dy\B)^{ \f23}\\
\leq& C\varepsilon^{\f{7}{3}} \B(
\int_{\mathbb{R}^{3}}
|U|^{3}|y|^{-2}dy\B)^{\f13}.
\ea$$
Exactly as the above, we get
$$\ba
\varepsilon^{3}\left|\int_{\mathbb{R}^{3}}U\varphi(\varepsilon y)dy\right| &\leq \varepsilon^{3} \B(\int_{|y|<\f{1}{\varepsilon}}|U|^{3}|y|^{-2}dy\B)^{\f13}
\B(\int_{|y|<\f{1}{\varepsilon}}|y||\varphi(\varepsilon y)|^{\f{3}{2}}dy\B)^{ \f23}\\
&\leq C\varepsilon^{\f{1}{3}}\B(
\int_{\mathbb{R}^{3}}
|U|^{3}|y|^{-2}dy\B)^{\f13}.
\ea$$
In view of  integrating
by parts once again, we see that
\be\ba
\varepsilon^3\int_{\mathbb{R}^{3}}(y\cdot\nabla) U(y)\varphi(\varepsilon y) dy = -\varepsilon^3\int_{\mathbb{R}^{3}} 3U(y)\varphi(\varepsilon y)dy- \varepsilon^3 \int_{\mathbb{R}^{3}} U(y)\{(\varepsilon y)\cdot \nabla\varphi(\varepsilon y)\}dy.
\ea\ee
Hence, proceeding as the above, we can also control $\left|\varepsilon^3\int_{\mathbb{R}^{3}}(y\cdot\nabla) U(y)\varphi(\varepsilon y) dy\right|$.

Thanks to the  divergence-free condition, we arrive at
\be\ba
&\varepsilon^3\int_{\mathbb{R}^{3}}(U\cdot\nabla) U(y)\varphi(\varepsilon y) dy\\
=&-\varepsilon^{4}\int_{\mathbb{R}^{3}}  U\otimes U(y)\cdot\nabla\varphi(\varepsilon y)dy,
\ea\ee
which in turn implies that
\be\label{UUe2}\ba
&\varepsilon^3\left|\int_{\mathbb{R}^{3}}(U\cdot\nabla) U(y)\varphi(\varepsilon y) dy\right| \\
\leq& \varepsilon^{4} \B(\int_{|y|<\f{1}{\varepsilon}}|U|^{3}|y|^{-2}dy\B)^{\f23}
\B(\int_{|y|<\f{1}{\varepsilon}}|y|^{4}|\nabla\varphi(\varepsilon y)|^{3}dy\B)^{\f{1}3}\\
\leq& C\varepsilon^{\f{5}{3}}\B(
\int_{\mathbb{R}^{3}}
|U|^{3}|y|^{-2}dy\B)^{\f23}.
\ea\ee
It remains to bound the last term involving pressure $\widetilde{\Pi}$. Integration by parts gives
\be\ba
\varepsilon^3\int_{\mathbb{R}^{3}}\nabla \widetilde{\Pi}(y)\,\varphi(\varepsilon y) dy
  =-\varepsilon^{4}\int_{\mathbb{R}^{3}}  \widetilde{\Pi}(y)\nabla\varphi(\varepsilon y)dy.
\ea\ee
Furthermore, due to (\ref{6.14}), a variant of (\ref{UUe2}) provides the estimate
$$
\varepsilon^{4}\left|\int_{\mathbb{R}^{3}}  \widetilde{\Pi}(y)\nabla\varphi(\varepsilon y)dy\right|
\leq C\varepsilon^{\f{5}{3}}\B(
\int_{\mathbb{R}^{3}}
|U|^{3}|y|^{-2}dy\B)^{\f23}.
$$
Therefore, we get $\nabla\Pi=\nabla \widetilde{\Pi}$.
Then we can define $\pi$ by $\widetilde{\Pi}$ via \eqref{learysb}.
Note that
$$\iint_{Q(1)}|\pi|^{\f32}dxdt=\int_{\mathbb{R}^{3}}|\widetilde{\Pi}|^{\f32}
 \lambda_{0}^{2}\min\big\{|y|^{-2},
 \lambda_{0}^{-2}\big\}dy.$$
From this and \eqref{6.14}, we know that
$\pi\in L^{3/2}(Q(1 ))$. This together with $u\in L^{3}(Q(1 ))$ implies \eqref{local-energy}  via local energy inequality \eqref{loc}, which concludes the proof of Theorem \ref{the1.4}.
\end{proof}

\section*{Acknowledgement}
The authors would like to express their sincere gratitude to Dr. Daoguo Zhou for short discussion
involving  interpolation inequality in Morrey spaces.
Q. Jiu was partially supported by the National Natural Science Foundation of
China (NNSFC) (No. 11671273, No. 11931010), key research project of the Academy for Multidisciplinary Studies of CNU, and Beijing Natural Science Foundation (BNSF) (No. 1192001).
The research of Wang was partially supported by  the National Natural Science Foundation of China under grant (No. 11971446 and  No. 11601492).
The research of Wei was partially supported by the National Natural Science Foundation of China under grant (No. 11601423, No. 11701450, No. 11701451, No. 11771352, No. 11871057) and Scientific Research Program Funded by Shaanxi Provincial Education Department (Program No. 18JK0763).


\begin{thebibliography}{00}
			
\bibitem{[Baker]}T.  Barker, Local boundary regularity for the Navier-Stokes equations in nonendpoint borderline Lorentz spaces.   Reprinted in J. Math. Sci.  (N.Y.) 224 (2017), 391--413.




			
			\bibitem{[CKN]}
			L. Caffarelli,  R. Kohn and L. Nirenberg,  Partial regularity
			of suitable weak solutions of Navier-Stokes equation,   Comm. Pure. Appl. Math.,  35  (1982), 771--831.




\bibitem{[CW]}
D. Chae and J. Wolf, On the Liouville type theorems for self-similar solutions to the Navier-Stokes equations. Arch. Ration. Mech. Anal., 225  (2017),  549--572.	
\bibitem{[CW2]}\bysame,
Removing discretely self-similar singularities for the 3D Navier-Stokes equations.   Commun. Partial
Differ. Equ., 42  (2017), 1359--1374

\bibitem{[CX]}
Z. Chen and Z.  Xin, Homogeneity criterion for the Navier-Stokes equations in the whole Spaces.   J. Math. Fluid Mech., 3  (2001) 152--182.

\bibitem{[Duoandikoetxea]}
J. Duoandikoetxea, Fourier Analysis, Amer. Math. Soc. Providence, RI, 2001.












		
			 			
				
\bibitem{[GSS]}
G. Galdi, C. Simader  and H. Sohr, On the Stokes problem in Lipschitz
domains,   Annali di Mat. pura ed appl.,  167  (1994),   147--163.
	
\bibitem{[Giaquinta]}		M. Giaquinta, Multiple integrals in the calculus of variations and nonlinear elliptic systems. Annals of Mathematics Studies, 105. Princeton University Press, Princeton, N.J., 1983.
		
\bibitem{[GKT]}
S. Gustafson, K. Kang and T. Tsai,
Interior regularity criteria for suitable weak			solutions of the Navier-Stokes equations,
Commun. Math. Phys., 273   (2007),  161--176.

 \bibitem{[GP]}
	 		C. Guevara   and N. C. Phuc,
 			Local energy bounds and $\varepsilon$-regularity criteria for the 3D Navier-Stokes system.   Calc. Var.   (2017) 56:68.	


 \bibitem{[GP2]}\bysame, Leray¡¯s self-similar solutions to the Navier-Stokes equations with profiles in Marcinkiewicz and Morrey spaces.  SIAM J. Math. Anal.,
       50  (2018) 541--556.

			
			
			
				
\bibitem{[HWZ]}C. He, Y. Wang and D. Zhou,
  New $\varepsilon$-Regularity Criteria of Suitable Weak Solutions of the 3D Navier-Stokes Equations at One Scale.   J. Nonlinear Sci., 29 (2019), 2681--2698.			

		\bibitem{[JWZ]}Q. Jiu,  Y.  Wang and D. Zhou, On Wolf's regularity criterion of suitable weak solutions to the Navier-Stokes equations.  J. Math. Fluid Mech.  21 (2019),   16 pp.

\bibitem{[Kato]}
T. Kato, Strong solutions of the Navier-Stokes equation in Morrey spaces,    Bol. Soc. Brasil.
Mat., 22 (1992),   127--155.

			
			
			
	
	 	
			

			
			

\bibitem{[L]}
P. G. Lemari\'e-Rieusset, The Navier-Stokes equations in the critical Morrey-Campanato space.    Revista Mat. Iberoamer.,  23 (2007), 897--930.


\bibitem{[Leray]} J. Leray, Sur le mouvement d\'eun liquide visqueux
emplissant l\'espace,     Acta Math.,   63  (1934), 193--248.

			
			
			
			
			
			
			
			
			
			
			
			
			
			
			
			
			
			
			
			
			
			
			
			
			
			
			
			
			
			
			
			
			
			


			
	






	

		



\bibitem{[MGO]}
Y. Meyer,  P. Gerard and F. Oru, In\'egalit\'es de Sobolev pr\'ecis\'ees.     S\'eminaire \'E quations aux d\'eriv\'ees partielles
(Polytechnique) Exp. No. 4, (1996-1997),
8 pp.
\bibitem{[MY]} 	C. Miao and B. Yuan, Weak Morrey spaces and strong solutions to the Navier-Stokes equations.    Science in China Series A: Mathematics, 50  (2007), 1401--1417.


\bibitem{[NRS]}
J. Ne\v{c}as,  M. R\r{a}u\v{z}i\v{c}ka and V. \v{S}ver\'{a}k, On Leray's self-similar solutions of the Navier-Stokes equations.   Acta. Math.,  176 (1996),   283--294.

\bibitem{[PP]}
G. Palatucci and A. Pisante, Improved Sobolev embeddings, profile decomposition, and concentration-compactness for fractional Sobolev spaces.   Calc. Var.      50 (2014), 799--829.





	

			
			
			
			
			
			
		

\bibitem{[Sawano]}
Y. Sawano, A non-dense subspace in $M^{p,q}$
  with $1<q<p<\infty$,
Trans. of A. Radmazde Mathematical Institute.  171  (2017), 379--380.
\bibitem{[Stein1]}
E. M. Stein,
 Singular integrals and differentiability properties of
  functions.
 Princeton Math. Ser., 30. Princeton University Press, Princeton, N.J., 1970.

\bibitem{[Stein2]}
E. M. Stein,
 Harmonic analysis: real-variable methods, orthogonality, and oscillatory
integrals.
 Princeton University Press, 1993.

\bibitem{[Taylor]}
	M. Taylor, Analysis on Morrey spaces and applications to Navier-Stokes and other evolution
equations,    Comm. Partial Differential Equations., 17 (1992) 1407--1456.



\bibitem{[Tsai]}
T. Tsai, On Leray's self-similar solutions of the Navier-Stokes equations satisfying local energy estimates. Arch. Ration. Mech. Anal.,  143 (1998), 29--51.
\bibitem{[Tsai2]}
\bysame,
Lectures on Navier-Stokes Equations, 192. American Mathematical Soc., 2018.
\bibitem{[WWZ]}
  Y. Wang, G. Wu and D. Zhou
A regularity criterion at one scale without pressure for suitable weak solutions to the Navier-Stokes equations.   J. Differential Equations., 267 (2019), 4673--4704.

\bibitem{[WWY]}
Y. Wang, W. Wei and H. Yu,			 $\varepsilon$-regularity criteria in Lorentz spaces   to the 3D Navier-Stokes equations. arXiv:1909.09957.
			
			
			
			
			
			
			

\bibitem{[Wolf1]}  J. Wolf, A new criterion for partial regularity of suitable weak solutions to the Navier-Stokes equations. In: Rannacher,
R., Sequeira, A. (eds.) Advances in Mathematical Fluid Mechanics, A.S.R edn,   Springer, Berlin, (2010), 613--630.

\bibitem{[Wolf2]} J. Wolf, On the local regularity of suitable weak solutions to the generalized Navier-Stokes equations. Ann. Univ.
Ferrara, 61 (2015), 149--171.
			
			
			


	


 		
			
	






	

			


		


 	
		
			
	
		\end{thebibliography}
	\end{document}